\tikzset{ext/.style={circle, draw,inner sep=1pt},int/.style={circle,draw,fill,inner sep=1.4pt},nil/.style={inner sep=1pt}}
\tikzset{cy/.style={circle,draw,fill,inner sep=2pt},scy/.style={circle,draw,inner sep=2pt},scyx/.style={draw,cross out,inner sep=2pt},scyt/.style={draw,regular polygon,regular polygon sides=3,inner sep=0.95pt}}
\tikzset{exte/.style={circle, draw,inner sep=3pt},inte/.style={circle,draw,fill,inner sep=3pt}}
\tikzset{diagram/.style={matrix of math nodes, row sep=3em, column sep=2.5em, text height=1.5ex, text depth=0.25ex}}
\tikzset{diagram2/.style={matrix of math nodes, row sep=0.5em, column sep=0.5em, text height=1.5ex, text depth=0.25ex}}
\tikzset{
  rightblue/.style={
    decoration={markings,mark=at position .8 with {\arrow[scale=1.2,blue]{latex}}},
    postaction={decorate},
    shorten >=0.4pt}}
\tikzset{
  leftblue/.style={
    decoration={markings,mark=at position .55 with {\arrowreversed[scale=1.2,blue]{latex}}},
    postaction={decorate},
    shorten >=0.4pt}}
\tikzset{
  rightred/.style={
    decoration={markings,mark=at position .45 with {\arrow[scale=1.2,red]{latex}}},
    postaction={decorate},
    shorten >=0.4pt}}
\tikzset{
  leftred/.style={
    decoration={markings,mark=at position .2 with {\arrowreversed[scale=1.2,red]{latex}}},
    postaction={decorate},
    shorten >=0.4pt}}
\newcommand{\mul}{{
\begin{tikzpicture}[baseline=-.55ex,scale=.2]
 \node[circle,draw,fill,inner sep=.5pt] (a) at (0,0) {};
 \node[circle,draw,fill,inner sep=.5pt] (b) at (1,0) {};
 \draw (a) edge[bend left=25] (b);
 \draw (a) edge[bend right=25] (b);
\end{tikzpicture}}}
\newcommand{\notadp}{{
\begin{tikzpicture}[baseline=-.55ex,scale=.2, every loop/.style={}]
 \node[circle,draw,fill,inner sep=.5pt] (a) at (0,0) {};
 \draw (a) edge[loop] (a);
 \draw (-.2,-.2) -- (.2,.5);
\end{tikzpicture}}}
\newcommand{\mxto}[1]{{\overset{#1}{\longmapsto}}}
\newcommand{\tto}[1]{{\overset{#1}{\longrightarrow}}}
\theoremstyle{plain}
  \newtheorem{thm}{Theorem}
  \newtheorem{defi}[thm]{Definition}
  \newtheorem{prop}[thm]{Proposition}
  \newtheorem{lemma}[thm]{Lemma}
\theoremstyle{definition}
  \newtheorem*{rem}{Remark}
\newcommand{\K}{{\mathbb{K}}}
\newcommand{\Z}{{\mathbb{Z}}}
\newcommand{\GS}{\mathrm{GS}}
\newcommand{\GC}{\mathrm{GC}}
\newcommand{\fGC}{\mathrm{fGC}}
\newcommand{\sGC}{\mathrm{sGC}}
\newcommand{\fGCc}{\mathrm{fGCc}}
\newcommand{\mV}{\mathrm{V}}
\newcommand{\mE}{\mathrm{E}}
\newcommand{\mB}{\mathrm{B}}
\newcommand{\mO}{\mathrm{O}}
\DeclareMathOperator{\sgn}{sgn}
\newcommand{\grac}{\mathrm{grac}}
\begin{document}
\title{Multi-directed graph complexes and quasi-isomorphisms between them I: oriented graphs}

\author{Marko \v Zivkovi\' c}
\address{Mathematics Research Unit\\ University of Luxembourg\\ Grand Duchy of Luxembourg}
\email{marko.zivkovic@uni.lu}


\keywords{Graph Complexes}

\begin{abstract}
We construct a direct quasi-isomorphism from Kontsevich's graph complex $\GC_n$ to the oriented graph complex $\mO\GC_{n+1}$, thus providing an alternative proof that the two complexes are quasi-isomorphic.
Moreover, the result is extended to the sequence of multi-oriented graph complexes, where $\GC_n$ and $\mO\GC_{n+1}$ are the first two members.
These complexes play a key role in the deformation theory of multi-oriented props recently invented by Sergei Merkulov.
\end{abstract}

\maketitle

\section{Introduction}

Generally speaking, graph complexes are graded vector spaces of formal linear combinations of isomorphism classes of some kind of graphs. Each of graph complexes play a certain role in a subfield of homological algebra or algebraic topology. They have an elementary and simple combinatorial definition, yet we know very little about what their cohomology actually is.

In this paper we study a sequence of graph complexes $\mO_k\GC_n$, called $k$-oriented graph complexes, for $k\geq 0$. The complex $\mO_0\GC_n$ equals the well known Kontsevich's graph complex $\GC_n$ introduced by M.\ Kontsevich in \cite{Kont1}, \cite{Kont2}. Bigger number $k$ introduces $k$ different kinds (colours) of orientation on edges. See Section \ref{s:gc} below for the strict definition.

$1$-oriented graph complex, or simply oriented graph complex $\mO_1\GC_n$ is first introduced (to the knowledge of the author) by Merkulov. In \cite{oriented} Willwacher showed that it is quasi-isomorphic to Kontsevich's graph complex $\GC_{n-1}$. The same result is the consequence of the broader theory of Merkulov and Willwacher, \cite[6.3.8.]{MW}.
The purpose of this paper is to present the third, more direct proof of the same result, and to extend it to $k$-oriented graph complexes, as stated in the following theorem.

\begin{thm}
\label{thm:main}
For every $k\geq 0$ there is a quasi-isomorphism $\mO_k\GC_n\rightarrow\mO_{k+1}\GC_{n+1}$.
\end{thm}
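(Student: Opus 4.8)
The plan is to write down a single explicit map
\[
\iota_k\colon\mO_k\GC_n\longrightarrow\mO_{k+1}\GC_{n+1}
\]
of degree $0$ and then to prove it is a quasi-isomorphism by a spectral-sequence argument; for $k=0$ this reproves Willwacher's comparison \cite{oriented}, which also serves as a consistency check. Up to the degree and sign conventions fixed in Section~\ref{s:gc}, the natural map of the correct degree sends a $k$-oriented graph $\Gamma$ to a signed sum $\sum_o(\Gamma,o)$ over all orientations $o$ of the edges of $\Gamma$ in a new $(k{+}1)$-st colour which make $(\Gamma,o)$ admissible. The first point to get right is that $\iota_k$ is a chain map. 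This is not automatic for the naive sum: the differential is vertex splitting, and contracting the edge created by a blow-up can turn an admissible $(k{+}1)$-orientation of the blown-up graph into a non-admissible one of $\Gamma$, so the ``extra'' blow-up terms on the two sides need not cancel unless one is careful. I would therefore either restrict or weight the sum so that these terms cancel in pairs, or --- more robustly, and in line with the standard technique in this area --- first replace both sides by their quasi-isomorphic subquotients of graphs with all vertices at least trivalent (and with no bivalent ``passing'' vertex for the $(k{+}1)$-st colour), where the combinatorics of admissible orientations is better controlled. Throughout, the first $k$ colours are passive spectators, so every step is uniform in $k$.

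For the comparison it suffices to show that the mapping cone of $\iota_k$ is acyclic; since $\iota_k$ is injective, equivalently that its cokernel complex $C_k$ is acyclic. Recording the underlying $k$-oriented graph, $C_k$ is, as a graded vector space, the direct sum over $k$-oriented graphs $G$ of the space of $\K$-valued functions on the set of admissible $(k{+}1)$-orientations of $G$ modulo the constants, and the vertex-splitting differential couples these summands. I would put on $C_k$ a filtration by a monotone complexity of the $(k{+}1)$-st orientation (controlled, say, by its numbers of sources and sinks) chosen so that the leading part $d_0$ of the differential on the associated graded only creates or removes a source or a sink of the new colour; fixing such a filtration is itself a nontrivial choice. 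The heart of the proof is then to show that $(E_0,d_0)$ is acyclic, which I would do by an explicit contracting homotopy: fixing an auxiliary order on the vertices, let $h$ split off (respectively absorb) the first source or sink of the new colour in that order, so that the usual cancellation between the two ways of inserting and removing that distinguished vertex gives $d_0h+hd_0=\mathrm{id}$ on $E_0$; equivalently one packages this as a perfect, filtration-respecting discrete Morse matching. The spectral sequence converges because in each fixed loop order and total degree only finitely many graphs occur, so $C_k$, and with it the cone of $\iota_k$, is acyclic. A handful of low-complexity graphs --- the ``loop'' classes and the graphs all of whose vertices are sources or sinks for the new colour --- fall outside this mechanism and must be matched by hand on the two sides, which also checks that the exceptional cohomology is transported correctly; for $k=0$ this is the comparison already present in \cite{oriented,MW}.

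I expect the decisive difficulty to be precisely the acyclicity of $(E_0,d_0)$: choosing the filtration and building the homotopy (or the Morse matching) so that it is genuinely perfect --- unmatched exactly on what must survive --- and controlling every sign along the way, while making sure the $k$ spectator colours never obstruct the matching, which they do not, since neither the differential nor the admissible-orientation combinatorics of the $(k{+}1)$-st colour interacts with them. The other place where care is needed is establishing the chain-map property of $\iota_k$ on the nose rather than only after the auxiliary reductions.
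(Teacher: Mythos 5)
There is a genuine gap at the very first step: the map you propose does not have degree zero, and this is not a matter of conventions. A graph with $v$ vertices and $e$ edges sits in degree $(v-1)n+(1-n)e$ in $\mO_k\GC_n$, whereas the same underlying graph equipped with an admissible $(k{+}1)$-st orientation sits in degree $(v-1)(n+1)-ne$ in $\mO_{k+1}\GC_{n+1}$; the difference is the first Betti number $e-v+1$, which is at least $2$ for the graphs spanning $\mO_k\GC_n$ and depends on the loop order. So the assignment $\Gamma\mapsto\sum_o(\Gamma,o)$ is not a degree-$0$ map and cannot induce the asserted isomorphism on homology. The cure is forced: one must add exactly $e-v+1$ vertices (each new vertex together with its new edge raises the degree by $1$ when the parameter is $n+1$), and this is precisely what the paper's map $h$ does. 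It sums over pairs $(x,\tau)$ of a vertex and a spanning tree, weighted by $v(x)-2$; tree edges become solid edges oriented away from $x$ in the new colour, and each of the $e-v+1$ non-tree edges is \emph{subdivided} into a length-two ``dotted'' skeleton edge carrying the antisymmetrized orientation pattern in the new colour. The image of $\Gamma$ is thus supported on subdivisions of $\Gamma$, not on $\Gamma$ itself, and the sum runs over spanning trees rather than over all admissible orientations.

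Beyond the degree problem, both load-bearing steps of your plan are deferred rather than carried out, and the paper resolves them quite differently. The chain-map property is a genuine computation there: the discrepancy between $\delta h$ and $h\delta$ is organized as a sum over unicyclic spanning subgraphs $\sigma$ and edges $T$ of their unique cycle, and one shows that $\sum_{T}C_T\bigl(h_{x,\sigma\setminus T}(\Gamma)\bigr)=0$ by a telescoping cancellation around the cycle whose two extreme terms vanish because they would create a cycle in the colour $k+1$. The acyclicity of the cone is then proved not by a source/sink Morse matching on orientations of a fixed graph, but by filtering on the number of (skeleton) vertices, splitting by the ``shape'' $\Sigma$ of the skeleton, and collapsing each piece through an explicit chain of quasi-isomorphisms $\Sigma^0\to\cdots\to\Sigma^{v-1}$ onto a one-dimensional complex, onto whose generator $\Sigma$ is sent with the nonzero coefficient $M\sum_x(v(x)-2)$; this also requires the preliminary reductions to the special complexes $\mO_{k+1}\sGC_{n+1}$ with tadpoles and multiple edges removed. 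Your sketch correctly identifies where the difficulty would lie for your map, but neither the filtration by sources and sinks nor the claimed perfect matching is exhibited, and as written the proposal does not get past the definition of the map.
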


Multi-oriented graph complexes play a key role in the deformation theory of multi-oriented props recently invented by Sergei Merkulov in \cite{Merk1} and to appear in \cite{Merk2}. Similarly to multi-oriented graphs, multi-oriented props have multiple directions on each edge, without loops. Props naturally have one basic direction that goes from the inputs to the outputs, and that direction is clearly without loops, see e.g.\ \cite{MV}. In his papers Merkulov gives a meaning to the extra directions, and provides interesting applications and representations of multi-oriented props.

Techniques used in this paper have recently been developed by the author to get new results about sourced graph complexes in \cite{Multi2}.

\subsection{Structure of the paper}
In Section \ref{s:gc} we define graph complexes needed in the paper. Sections \ref{s:sc} and \ref{s:sogc} introduce some sub-complexes that are more convenient to work with. Finally, in Section \ref{s:qi} we construct the quasi-isomorphism and prove Theorem \ref{thm:main}.

\subsection*{Acknowledgements}

I am very grateful to Sergei Merkulov for providing the motivation for this result, especially to extend it for $k>1$. I thank Thomas Willwacher for fruitful discussions.

\section{Graph complexes}
\label{s:gc}

In this section we define oriented graph complex $\mO_k\fGC_n$ for $n\in\Z$, $k\geq 0$, called \emph{$k$-oriented graph complex}.

Particularly $\mO_0\fGC_n$ is the Kontsevich's graph complex $\fGC_n$ defined for example in \cite{grt}, cf.\ \cite{eulerchar}. Strictly speaking, the complex we define here is the dual of the one in those papers, but it does not change the homology. Since the complex is defined as a formal vector space over the bases consisting graphs, the dual can be identified with the complex as the vector space. The real difference is in the differential.

We will work over a field $\K$ of characteristic zero. All vector spaces and differential graded vector spaces are assumed to be $\K$-vector spaces.

\subsection{Graphs}

\begin{defi}
Let $v>0$, $e\geq 0$ and $k\geq 0$ be integers. Let $V:=\{0,2,\dots,v-1\}$ be set of vertices, $E:=\{1,2,\dots,e\}$ set of edges and $K:=\{1,2,\dots,k\}$ set of ``colors''.

A \emph{graph} $\Gamma$ with $v$ vertices and $e$ edges is a map $\Gamma=(\Gamma_-,\Gamma_+):E\rightarrow V^2$ such that $\Gamma_-(a)\neq\Gamma_+(a)$ for every $a\in E$.

A \emph{$k$-oriented graph} is the graph $\Gamma$ together with maps $o_c:E\rightarrow \{+,-\}$ for $c\in K$ such that for every $c$ there is no cycle $a_0,a_1,\dots, a_i=a_0$ such that $\Gamma_{o_c(a_j)}(a_j)=\Gamma_{-o_c(a_{j+1})}(a_{j+1})$ for every $j=0,\dots,i-1$.
\end{defi}

The orientation of an edge $a$ from $\Gamma_-(a)$ to $\Gamma_+(a)$ is called the \emph{intrinsic} orientation of the edge $a$. For a color $c\in K$ the orientation $o_c(a)$ is called the \emph{orientation of the color $c$} of the edge $a$ and goes from $\Gamma_{-o_c(a)}(a)$ to $\Gamma_{o_c(a)}(a)$. The condition in the definition of the oriented graph says that there is no oriented cycle in the graph in any color. Some examples of the graphs are drawn in Figure \ref{fig:exGraphs}. Note that by our definition a graph has distinguishable vertices and edges. No tadpoles (i.e.\ edges $a$ such that $\Gamma_-(a)=\Gamma_+(a)$) are allowed.

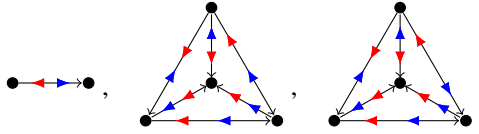
\begin{figure}[h]
\centering
$$
\begin{tikzpicture}[baseline=-1ex]
\node[int] (a) at (0,0) {};
\node[int] (b) at (1,0) {};
\draw (a) edge[->,rightblue,leftred] (b);
\end{tikzpicture}
\;,\quad
\begin{tikzpicture}[baseline=-1ex]
\node[int] (a) at (0,0) {};
\node[int] (b) at (90:1) {};
\node[int] (c) at (210:1) {};
\node[int] (d) at (330:1) {};
\draw (a) edge[<-,rightblue,leftred] (b);
\draw (a) edge[<-,rightblue,rightred] (c);
\draw (a) edge[<-,leftblue,leftred] (d);
\draw (b) edge[->,leftblue,rightred] (c);
\draw (b) edge[->,leftblue,leftred] (d);
\draw (c) edge[->,leftblue,leftred] (d);
\end{tikzpicture}
\;,\quad
\begin{tikzpicture}[baseline=-1ex]
\node[int] (a) at (0,0) {};
\node[int] (b) at (90:1) {};
\node[int] (c) at (210:1) {};
\node[int] (d) at (330:1) {};
\draw (a) edge[<-,rightblue,leftred] (b);
\draw (a) edge[<-,rightblue,rightred] (c);
\draw (a) edge[<-,leftblue,leftred] (d);
\draw (b) edge[->,leftblue,rightred] (c);
\draw (b) edge[->,rightblue,leftred] (d);
\draw (c) edge[->,leftblue,leftred] (d);
\end{tikzpicture}
\;.
$$

\caption{\label{fig:exGraphs}
Example of 2-oriented graphs. The intrinsic orientation is depicted by the simple black arrow, while the colored orientations are depicted by thick red and blue arrows. The last diagram does not represent an oriented graph because it contains a blue cycle.}
\end{figure}

\begin{defi}
For $a\in E$ we say that vertices $\Gamma_-(a)$ and $\Gamma_+(a)$ are connected. We extend the notion of being connected by transitivity, such that it is a relation of equivalence. Equivalence classes are called \emph{connected components}. A graph is \emph{connected} if it has one connected component and \emph{disconnected} if it has more than one connected component.
\end{defi}

Let
\begin{equation}
\mO_k\bar\mV_v\bar\mE_e\grac
\end{equation}
be the set of all connected $k$-oriented graphs with $v$ vertices and $e$ edges.

\subsection{Group actions on the set of graphs}

There is a natural action of the symmetric group $S_v$ on $\mO_k\bar\mV_v\bar\mE_e\grac$ that permutes vertices:
\begin{equation}
(\sigma\Gamma)(a)=(\sigma(\Gamma_-(a)),\sigma(\Gamma_+(a)))
\end{equation}
for $\sigma\in S_v$ and $a\in E$.

Similarly, there is an action of the symmetric group $S_e$ on $\mO_k\bar\mV_v\bar\mE_e\grac$ that permutes edges:
\begin{equation}
(\sigma\Gamma)(a)=\Gamma(\sigma^{-1}(a)),\qquad
(\sigma o_c)(a)=o_c(\sigma^{-1}(a))
\end{equation}
for $\sigma\in S_e$, $a\in E$ and $c\in K$.

Finally, for every edge $a\in E$ there is an actions of the symmetric group $S_2$ on $\mO_k\bar\mV_v\bar\mE_e\grac$ that reverses the intrinsic orientation of the edge $a$:
\begin{equation}
(\sigma\Gamma)(a)=(\sigma_+(a),\sigma_-(a)),\qquad
(\sigma o_c)(a)=-o_c(a)
\end{equation}
for $c\in K$ where $\sigma\in S_2$ is the non-trivial element. Note that the colored orientation is always preserved under this action relative to actual vertices. By this definition, the condition of having colored cycle is preserved.

All this actions together define the action of the group $S_v\times \left(S_e\ltimes S_2^{\times e}\right)$ on $\mO_k\bar\mV_v\bar\mE_e\grac$.

\subsection{Graded graph space}

\begin{defi}
Let $n$ be integer. Then
\begin{equation}
\mO_k\bar\mV_v\bar\mE_e\GS:=\langle\mO_k\bar\mV_v\bar\mE_e\grac\rangle[(v-1)n+(1-n)e].
\end{equation}
is the vector space of formal linear combinations of graphs. It is a graded vector space with non-zero term only in degree $d=(v-1)n+(1-n)e$.
\end{defi}

The action of the group $S_v\times \left(S_e\ltimes S_2^{\times e}\right)$ is by linearity extended to the space $\mO_k\bar\mV_v\bar\mE_e\GS$.

Let $\sgn_v$, $\sgn_e$ and $\sgn_2$ be one-dimensional representations of $S_v$, respectively $S_e$, respectively $S_2$, where the odd permutation reverses the sign. They can be considered as representations of the whole product $S_v\times \left(S_e\ltimes S_2^{\times e}\right)$.

\begin{defi}
For $k\geq 0$ the \emph{full $k$-oriented graph complex} is
\begin{equation}
\label{def:fGCi}
\mO_k\fGCc_n:=\left\{
\begin{array}{ll}
\displaystyle\bigoplus_{v,e}\left(\mO_k\bar\mV_v\bar\mE_e\GS\otimes\sgn_e\right)_{S_v\times \left(S_e\ltimes S_2^{e}\right)}
\qquad&\text{for $n$ even,}\\
\displaystyle\bigoplus_{v,e}\left(\mO_k\bar\mV_v\bar\mE_e\GS\otimes\sgn_v\otimes\sgn_2^e\right)_{S_v\times \left(S_e\ltimes S_2^{e}\right)}
\qquad&\text{for $n$ odd.}
\end{array}
\right.
\end{equation}
\end{defi}

The group in the subscript means taking coinvariants of the group action. The effect of tensoring with one-dimensional sign representation for $n$ even is that switching edges turns graph to its negative, while vertices and intrinsic orientation of edges are indistinguishable. We say that edges are odd, while vertices and intrinsic edge orientations are even. For $n$ odd edges are indistinguishable and switching vertices and intrinsic edge orientations in $\mO_k\fGCc_n$ turns graph to its negative. We say that vertices and intrinsic edge orientations are odd, while edges are even.

With abuse of terminology, the isomorphism class of a graph will also be called a graph. For distinguishing, any linear combination of graphs will not be called a graph.

Graphs will be drawn without mentioning number of vertices or edges, or without intrinsic orientation on edges, if the element is even in the complex. We still do the same if the element is odd, and the sign of the graph is not important. The colored orientation is always drawn since it is the essential data of the graph.

\begin{rem}
For $k=0$ the above definition is exactly the definition of Kontsevich's graph complex $\fGCc_n$.
\end{rem}

\begin{rem}
For $k=1$ we could have used the intrinsic orientation as the colored one. Than the actions of the group $S_2$ that reverse the orientation would not been needed, making the definition a bit simpler in this case.

For $k>1$ the intrinsic orientation could have been used as the first colored orientation, and the other colored orientation would be defined relative to the first one. That would make one color special.

However, we use the described definition in order to have one consistent definition for all $k$.
\end{rem}

\subsection{The differential}

The differential on $\mO_k\fGCc_n$ is defined for a graph $\Gamma$ as follows:
\begin{equation}
 \delta\Gamma\,=\sum_{t\in E}c_t(\Gamma) - \sum_{\substack{x\in V\\x\text{ 1-valent}}}d_x(\Gamma),
\end{equation}
where the map $c_t$ is ``contracting $t$'' and means putting a vertex $x$ instead of 
\begin{tikzpicture}[scale=.5]
 \node[int] (a) at (0,0) {};
 \node[int] (b) at (1,0) {};
 \draw (a) edge[->] node[above] {$\scriptstyle t$} (b);
 \node[above left] at (a) {$\scriptstyle x$};
 \node[above right] at (b) {$\scriptstyle v$};
\end{tikzpicture}
and reconnecting all edges that were previous connected to old $x$ and $v$ to the new $x$. Before contracting we permute vertices and edges in order to put vertex $v$ and the edge $t$ to be the last. If the contraction forms a cycle of any color, the resulting graph is considered to be $0$. The map $d_v$ is ``deleting $x$ and its edge'' and means deleting the vertex $x$ and one edge adjacent to it, after permuting vertices and edges in order to put vertex $x$ and the edge adjecent to it to be the last and changing the intrinsic direction of the edge towards $x$. Unless $t$ connects two 1-valent vertices, $d_x$ will cancel contracting the edge adjacent to $x$.

One can check that the differential is well defined and that $\delta^2=0$.

In the paper we will use some sub-complexes spanned by special subset of graphs, e.g.\ graphs with at least 2-valent vertices in complex $\mO_k\fGCc_n^{\geq 2}$. The definition of that complexes is straightforward, we just start from the set of particular graphs instead of $\mO_k\bar\mV_v\bar\mE_e\grac$.

\subsection{Splitting complexes and convergence of spectral sequence}

Spectral sequences will be used a lot in the paper. One wants that spectral sequence converges to the homology of the starting complex. In that case we say that spectral sequence converges \emph{correctly}.

For ensuring the correct convergence of a spectral sequence standard arguments are used, such as those from \cite[Appendix C]{DGC1}. E.g.\ we want spectral sequence to be bounded in each degree.

The differential does not change the \emph{loop number} $b:=e-v$, so the defined complexes split as the direct sum:
\begin{equation}
\mO_k\fGC_n=\bigoplus_{b\in\Z}\mB_b\mO_k\fGC_n,
\end{equation}
where $\mB_b\mO_k\fGC_n$ is the sub-complex with fixed $b=e-v$. This will be the case also for the other complexes defined later.

To show that a spectral sequence of the complex that is equal to the direct sum of simpler complexes converges correctly it is enough to show the statement for the complexes in the sum. Sub-complexes with fixed loop number mentioned above will often have bounded spectral sequences. That easily implies their correct convergence, and hence the correct convergence of the whole complex.

\subsection{Distinguishable vertices}

In the paper we will introduce differentials that does not change the number of vertices in the complex. In that case it is reasonable to consider complexes with distinguishable vertices:
\begin{equation}
\bar\mV\mO_k\fGCc_n:=\left\{
\begin{array}{ll}
\displaystyle\bigoplus_{v,e}\left(\mO_k\bar\mV_v\bar\mE_e\GS\otimes\sgn_e\right)_{\left(S_e\ltimes S_2^{e}\right)}
\qquad&\text{for $n$ even,}\\
\displaystyle\bigoplus_{v,e}\left(\mO_k\bar\mV_v\bar\mE_e\GS\otimes\sgn_2^e\right)_{\left(S_e\ltimes S_2^{e}\right)}
\qquad&\text{for $n$ odd.}
\end{array}
\right.
\end{equation}
and similarly for their sub-complexes. The original complexes are now the complexes of coinvariants of this complexes, possibly tensored with $\sgn_v$, under the action of $S_v$.

The homology in respect to the differential that does not change the number of vertices commutes with the action of the group, so to understand the homology of the original complex it is enough to understand the homology of this complex with distinguishable vertices.

\section{Sub-complexes of full oriented graph complex}
\label{s:sc}

In this section we investigate some simple sub-complexes of the full oriented graph complex $\mO_k\fGC_n$ needed in the paper. The results are generalization of \cite[Proposition 3.4]{grt} and use essentially the same ideas for proving.

\subsection{At least 2-valent vertices}

Let $\mO_k\fGCc_n^{\geq i}$ be the complex of graphs with vertices at least $i$-valent. We will particularly be interested in the case when $i=2$. One easily checks that the differential can not produce less then 2-valent vertex, so $\mO_k\fGCc_n^{\geq 2}$ is a sub-complex.

We call \emph{the passing vertex} a 2-valent vertex which is the head of one edge and the tail of another for every color.

\begin{prop}
\label{prop:2}
$$
H(\mO_k\fGCc_n)=H\left(\mO_k\fGCc_n^{\geq 2}\right)
$$
\end{prop}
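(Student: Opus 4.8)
The plan is to show that the inclusion $\mO_k\fGCc_n^{\geq 2}\hookrightarrow\mO_k\fGCc_n$ is a quasi-isomorphism by a standard spectral sequence argument, exactly in the spirit of \cite[Proposition 3.4]{grt}. The key observation is that the full complex decomposes, up to the part $\mO_k\fGCc_n^{\geq 2}$, into pieces built from vertices of valence $0$ and $1$ attached to a ``core'' graph, and those pieces carry an acyclic differential.

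First I would set up a filtration on $\mO_k\fGCc_n$ whose associated graded retains only a piece of the differential. A convenient choice is to filter by the number of vertices that are at least $2$-valent (equivalently, by the number of vertices of valence $\leq 1$, with a sign); the differential $\delta = \sum_t c_t - \sum_x d_x$ either preserves or decreases this quantity in a controlled way, so on the first page of the spectral sequence only the part of $\delta$ acting on the $\leq 1$-valent vertices survives. One must first discard the graphs containing a connected component that is a single edge (two $1$-valent vertices joined by an edge) or an isolated vertex — by the connectedness requirement in $\mO_k\bar\mV_v\bar\mE_e\grac$ these cannot occur unless the whole graph is such a component, so this is a genuinely small exceptional set handled by a separate direct-sum splitting, and one checks that the corresponding tiny complexes are acyclic (or contribute nothing after the sign/coinvariants for the relevant $n$), just as in \cite{grt}.

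Next, on the associated graded, the differential is the one that only adds or removes an ``antenna'': a $1$-valent vertex dangling from a fixed core. Fixing the core graph $\gamma\in\mO_k\fGCc_n^{\geq 2}$, the part of the complex built on $\gamma$ is a tensor product over the vertices of $\gamma$ of a standard ``adding a hair'' complex, and the piece coming from the $d_x$ term versus the $c_t$ term splitting off a hair gives precisely the Koszul-type differential whose homology is concentrated on the subcomplex with no hairs at all, i.e.\ $\gamma$ itself. This is where I would invoke the usual computation that the complex of a vertex with arbitrarily many $1$-valent neighbours attached by edges is acyclic except in the state with no such neighbours; for $k$-oriented graphs one notes that a $1$-valent vertex never sits on any colored cycle, so adding or deleting its edge never runs into the ``resulting graph is $0$'' clause, and the colored orientations of the core are simply carried along, so the computation is literally the same as in the unoriented case. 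Hence the first page of the spectral sequence is $\mO_k\fGCc_n^{\geq 2}$ with the induced (core-contracting) differential, and the spectral sequence degenerates there.

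Finally I would check convergence: the complex splits as $\bigoplus_b \mB_b\mO_k\fGCc_n$ by loop number $b=e-v$, and within each fixed $b$ the filtration above is bounded (the number of $\leq 1$-valent vertices is bounded by $v$, and in each fixed degree $d=(v-1)n+(1-n)e$ with $b$ fixed there are only finitely many admissible $(v,e)$ when $n\neq 1$, while the case $n=1$ is handled as in \cite{grt}), so the spectral sequence converges correctly by the standard arguments of \cite[Appendix C]{DGC1}. The main obstacle, and the step deserving the most care, is the bookkeeping of the exceptional low-valence components together with the signs coming from $\sgn_e$ resp.\ $\sgn_v\otimes\sgn_2^e$ and the $S_2$-actions reversing intrinsic orientations — making sure that, after passing to coinvariants, the ``adding a hair'' complexes really are acyclic and no spurious classes survive; everything else is a routine transcription of the known unoriented argument, since the colored-orientation data plays no active role in a construction that only ever touches $1$-valent vertices.
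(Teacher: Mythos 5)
Your proposal correctly identifies the target (acyclicity of the part of $\mO_k\fGCc_n$ built from graphs with $\leq 1$-valent vertices) and the precedent in \cite[Proposition 3.4]{grt}, but the mechanism you base the proof on is not present in this complex. The differential here is $\delta=\sum_t c_t-\sum_x d_x$, and, as noted in its definition, for a $1$-valent vertex $x$ with adjacent edge $t$ the contraction $c_t$ produces the same graph as the deletion $d_x$, so these two terms cancel. Consequently the effective differential only contracts edges both of whose endpoints are at least $2$-valent: it never touches a $1$-valent vertex and never creates one. In particular there is no ``part of $\delta$ acting on the $\leq 1$-valent vertices'' to survive to the first page of your spectral sequence --- that part is identically zero. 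This cancellation is exactly why the paper obtains a \emph{direct sum} $\mO_k\fGCc_n=\mO_k\fGCc_n^{\geq 2}\oplus\mO_k\fGCc_n^{1}$ rather than merely a filtration; your filtration by the number of $\geq 2$-valent (equivalently, of $1$-valent) vertices recovers this splitting but then leaves the entire differential on each summand, so nothing is simplified. The Koszul-type ``adding/removing a hair'' complex you invoke would be the right picture for the dual complex with the vertex-splitting differential, not for the contraction differential used in this paper.

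The second, related gap is that the acyclicity of $\mO_k\fGCc_n^{1}$ is not governed by hairs attached directly to a core: a $1$-valent vertex typically sits at the end of an \emph{antenna}, a maximal chain of $1$-valent and passing vertices, and the one edge joining the $1$-valent vertex to its neighbour is precisely the edge whose contraction cancels against $d_x$. What does act are the contractions of edges further along the antenna, which retract it; accordingly the paper filters by the number of edges \emph{not} in an antenna and uses a homotopy that extends an antenna, summed over all antennas. Your tensor-product-over-core-vertices decomposition does not see antennas of length greater than one, and with the correct (contraction-only, hair-preserving) differential the associated graded of your filtration is not acyclic for the reason you give. To repair the argument you would need to replace the hair-Koszul step by the antenna retraction/extension homotopy, at which point you have essentially reproduced the paper's proof.
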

\begin{proof}
The differential can neither create nor destroy 1-valent or isolated vertices. Therefore we have direct sum of complexes
$$
\mO_k\fGCc_n=\mO_k\fGCc_n^{\geq 2}\oplus \mO_k\fGCc_n^{1}
$$
where $\mO_k\fGCc_n^{1}$ is the sub-complex of graphs containing at least one 1-valent vertex, including the single vertex graph. It is enough to prove that $\mO_k\fGCc_n^{1}$ is acyclic.

We call \emph{the antenna} a maximal connected subgraph consisting of 1-valent and passing vertices in a graph.

We set up a spectral sequence on $\mO_k\fGCc_n^{1}$ on the number of edges that are not in an antenna. The spectral sequence is bounded, and hence converges correctly. The first differential is the one retracting an antenna. There is a homotopy that extends an antenna (summed over all antennas) that leads to the conclusion that the first differential is acyclic, and hence the whole differential.
\end{proof}

\subsection{No passing vertices}

Let $\mO_k\fGCc_n^{\circ}\subset\mO_k\fGCc_n^{\geq 2}$ be spanned by the graphs with only 2-valent vertices, and let $\mO_k\fGCc_n^{\varnothing}\subset\mO_k\fGCc_n^{\geq 2}$ be spanned by graphs that have at least one vertex that is at least 3-valet. Clearly, they are sub-complexes and
\begin{equation}
\mO_k\fGCc_n^{\geq 2}=\mO_k\fGCc_n^{\circ}\oplus\mO_k\fGCc_n^{\varnothing}.
\end{equation}

Let $\mO_k\fGCc_n^{\rightarrow}\subset\mO_k\fGCc_n^{\geq 2}$ be the complex of graphs with at least one passing vertex. The differential can not destroy the last passing vertex, so it is indeed the sub-complex. Let $\mO_k\fGCc_n^{\nrightarrow}$ be the quotient $\mO_k\fGCc_n^{\geq 2}/\mO_k\fGCc_n^{\rightarrow}$. For $k=0$ every 2-valent vertex is the passing vertex, so $\mO_0\fGCc_n^{\nrightarrow}\cong\mO_0\fGCc_n^{\geq 3}$. We also have sub-complexes $\mO_k\fGCc_n^{\circ\nrightarrow}$ and $\mO_k\fGCc_n^{\varnothing\nrightarrow}$. We will often use the shorter notation
\begin{equation}
\mO_k\GC_n:=\mO_k\fGCc_n^{\varnothing\nrightarrow}.
\end{equation}

\begin{prop}
\begin{enumerate}
\item[]
\item For $k\geq 0$ it holds that $H\left(\mO_k\fGCc_n^{\varnothing}\right)=H\left(\mO_k\GC_n\right)$,
\item For $k\geq 1$ it holds that $H\left(\mO_k\fGCc_n^{\circ}\right)=H\left(\mO_k\fGCc_n^{\circ\nrightarrow}\right)$.
\end{enumerate}
\end{prop}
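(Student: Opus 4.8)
The plan is to prove both statements by exhibiting, in each case, a sub-complex of graphs carrying passing vertices which is acyclic, so that passing the quotient does not change homology. The strategy mirrors the proof of Proposition~\ref{prop:2}: set up a spectral sequence whose first page differential only manipulates passing vertices, and build an explicit homotopy showing that first page is acyclic.

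For part (1), I would show that $\mO_k\fGCc_n^{\varnothing\rightarrow}:=\mO_k\fGCc_n^{\varnothing}\cap\mO_k\fGCc_n^{\rightarrow}$ (graphs with at least one vertex of valence $\geq 3$ \emph{and} at least one passing vertex) is acyclic; combined with the splitting $\mO_k\fGCc_n^{\varnothing}=\mO_k\fGCc_n^{\varnothing\rightarrow}\oplus\mO_k\GC_n$ this gives the claim. Since a passing vertex on an edge of color $c$ is the head of one edge and tail of the other in every color, a maximal string of passing vertices behaves like a single edge that can be lengthened or shortened. Define a \emph{string} as a maximal chain of passing vertices together with the edges joining them, terminated at both ends by vertices that are not passing (of valence $\geq 3$, since we are in the $\varnothing$ part with no $1$-valent vertices). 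I would filter by the number of edges \emph{not} lying in any string; the associated graded differential is the part of $\delta$ that contracts an edge inside a string or, dually, the deletion part is irrelevant here. The contracting-a-string-edge operator shortens a string; the homotopy inserts a new passing vertex in the middle of a chosen string (summed appropriately over strings, with care about the endpoint cases and signs coming from the orientation-reversal actions $S_2^{\times e}$). One checks that composing gives a multiple of the identity plus lower-filtration terms, exactly as in the antenna argument; the filtration is bounded within each fixed loop order $\mB_b$, so the spectral sequence converges correctly and acyclicity of the first page propagates.

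For part (2) the graphs in $\mO_k\fGCc_n^{\circ}$ have \emph{only} $2$-valent vertices, so each connected component is a single cycle (a polygon); the passing vertices are those $2$-valent vertices that respect all colored orientations, and $\mO_k\fGCc_n^{\circ\rightarrow}:=\mO_k\fGCc_n^{\circ}\cap\mO_k\fGCc_n^{\rightarrow}$ is the sub-complex of polygons carrying at least one passing vertex. Here the key point is that $k\geq 1$: in a polygon with $k\geq 1$ colors, a non-passing $2$-valent vertex is one at which some colored orientation ``turns around'' (a source or sink for that color), so polygons genuinely have two types of $2$-valent vertex and the filtration by ``number of non-passing vertices'' is non-trivial, whereas for $k=0$ every vertex is passing and the statement would be vacuous/false in the needed form. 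I would again filter so that the first page differential only contracts edges between a passing vertex and its neighbor (never destroying the last non-passing vertex, which is what makes $\mO_k\fGCc_n^{\circ\rightarrow}$ a genuine sub-complex and keeps the quotient $\mO_k\fGCc_n^{\circ\nrightarrow}$ well defined), and use the same insert-a-passing-vertex homotopy localized on each arc of the polygon between consecutive non-passing vertices. Acyclicity of $\mO_k\fGCc_n^{\circ\rightarrow}$ then yields $H(\mO_k\fGCc_n^{\circ})=H(\mO_k\fGCc_n^{\circ\nrightarrow})$.

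The main obstacle I expect is bookkeeping of signs and the degenerate endpoint cases in the homotopy, rather than any conceptual difficulty. Specifically: when a string (or a polygon arc) has length one, inserting a vertex and contracting can fail to cancel cleanly, and one must sum the ``extend'' homotopy over all strings/arcs and track the signs introduced by the $S_2^{\times e}$-action that reverses intrinsic edge orientation (and the accompanying $\sgn_2^e$ twist for $n$ odd, resp.\ $\sgn_e$ for $n$ even). Getting the combinatorial factor to come out to a nonzero scalar multiple of the identity on the first page — exactly as in \cite[Proposition 3.4]{grt} and Proposition~\ref{prop:2} above — is the crux; once that is in place, boundedness of the filtration on each $\mB_b\mO_k\fGCc_n^{\bullet}$ and the standard convergence argument from \cite[Appendix C]{DGC1} finish the proof.
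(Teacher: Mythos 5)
Your argument is essentially the paper's own: in both parts one shows that the span of graphs with at least one passing vertex is acyclic by filtering so that the first-page differential only shortens maximal strings of passing vertices, and then exhibiting the string-extending homotopy; the paper filters by the number of non-passing vertices rather than by the number of edges outside strings, but this is the same device and either filtration is bounded on each fixed loop order. One correction is needed: for $k\geq 1$ the decomposition $\mO_k\fGCc_n^{\varnothing}=\mO_k\fGCc_n^{\varnothing\rightarrow}\oplus\mO_k\GC_n$ is \emph{not} a direct sum of complexes. Contracting the edge between two $2$-valent non-passing vertices (say a source and a sink in some colour) can create a passing vertex, so graphs without passing vertices do not span a sub-complex; $\mO_k\GC_n$ is the \emph{quotient} of $\mO_k\fGCc_n^{\varnothing}$ by the sub-complex $\mO_k\fGCc_n^{\varnothing\rightarrow}$, which is exactly how the paper sets it up. Acyclicity of that sub-complex still yields the homology identification via the long exact sequence, so nothing in your plan breaks, but the splitting as stated is false. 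Two smaller points: the vertices terminating a string of passing vertices need not be $\geq 3$-valent, only non-passing (they may be $2$-valent sources or sinks in some colour); and in part (2) the essential fact to state is that for $k\geq 1$ every polygon contains at least one (hence at least two) non-passing vertices, because an all-passing polygon would be a forbidden coloured cycle --- this, rather than mere ``non-triviality of the filtration,'' is what anchors the strings and is precisely where the hypothesis $k\geq 1$ enters.
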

\begin{proof}
In (1) it is enough to prove that $\mO_k\fGCc_n^{\varnothing\rightarrow}$ is acyclic.

We set up a spectral sequence on $\mO_k\fGCc_n^{\varnothing\rightarrow}$ on the number of non-passing vertices. The spectral sequence obviously converges correctly. The first differential decreases the number of passing vertices by one. There is a homotopy that extends the string of neighboring passing vertices by one, summed over all such strings, showing the acyclicity.

For (2) in $\mO_k\fGCc_n^{\circ}$ there is at least one (and therefore 2) non-passing vertices because there are no colored cycles. Therefore we can again group passing vertices into the strings of neighboring ones and do the same proof. For $k=0$ this does not work because there are no non-passing vertices in this case, and it does not make a colored cycles for there are no colors.
\end{proof}

\section{Special oriented graph complex}
\label{s:sogc}

In this section we define the special oriented graph complex $\mO_{k+1}\sGC_n$ that makes the natural codomain of the quasi-isomorphism $\mO_k\GC_n\rightarrow\mO_{k+1}\sGC_{n+1}$. It is a sub-complex (or a quotient) of $\mO_k\GC_n$, but it can also be understood as a special complex with additional kind of edges.

Let $k\geq 0$. We will now work with the complexes that have $k+1$ colors to emphasize the last color $k+1$ which will play a distinctive role. We draw it in black. In what follows, we will take the representative of the coinvariant class to have intrinsic orientation the same as orientation of the color $k+1$. This convention is important for signs.

\subsection{Skeleton graph}

Let $\Gamma\in\mO_{k+1}\fGCc_n^{\nrightarrow}$ be a graph. \emph{Weakly passing vertex} in $\Gamma$ is the vertex that is passing in all colors $c\leq k$ and not passing in the color $k+1$. Vertices in $\Gamma$ that are not weakly passing are called \emph{skeleton vertices}. Note that there is at least one skeleton vertex if $k\geq 1$ or if $\Gamma\in\mO_1\GC_n$ for $k=0$. If $k=0$ all 2-valent vertices are weakly passing, so $\Gamma\in\mO_1\fGCc_n^{\circ\nrightarrow}$ does not have skeleton vertices.

Now, for $\Gamma\in\mO_{k+1}\GC_n$ or $\Gamma\in\mO_{k+1}\fGCc_n^{\circ\nrightarrow}$ if $k\geq 1$ we consider the graph with skeleton vertices as vertices, and the string of edges and vertices between them, called \emph{skeleton edges}, as edges. We call that graph the \emph{skeleton} of $\Gamma$. Skeleton edges are the strings of edges and 2-valent weakly passing vertices, edges heading towards the same direction in all colors $c\leq k$ and alternating in the color $k+1$.

\subsection{The construction}

In skeleton, there are two kind of edges of length 2 (2 edges in original graph) regarding the color $k+1$:
\begin{tikzpicture}[baseline=-.65ex,scale=.5]
 \node[int] (a) at (0,0) {};
 \node[int] (b) at (1,0) {};
 \node[int] (c) at (2,0) {};
 \draw (a) edge[-latex] (b);
 \draw (b) edge[latex-] (c);
\end{tikzpicture} and 
\begin{tikzpicture}[baseline=-.65ex,scale=.5]
 \node[int] (a) at (0,0) {};
 \node[int] (b) at (1,0) {};
 \node[int] (c) at (2,0) {};
 \draw (a) edge[latex-] (b);
 \draw (b) edge[-latex] (c);
\end{tikzpicture}.
Regarding other colors $c\leq k$, there can be any orientation in them, but the same for both edges in original graph.
Instead of those two kind of edges, we will use the following two kinds of edges, that span the same space:
\begin{equation}
\label{def:dotted}
\begin{tikzpicture}[baseline=-.65ex,scale=.7]
 \node[int] (a) at (0,0) {};
 \node[int] (c) at (1,0) {};
 \node[above] at (a) {$\scriptstyle x$};
 \node[above] at (c) {$\scriptstyle y$};
 \draw (a) edge[dotted,->,leftred] (c);
\end{tikzpicture}
:=\frac{1}{2}\left(
\begin{tikzpicture}[baseline=-.65ex,scale=.7]
 \node[int] (a) at (0,0) {};
 \node[int] (b) at (1,0) {};
 \node[int] (c) at (2,0) {};
 \node[above] at (a) {$\scriptstyle x$};
 \node[above] at (b) {$\scriptstyle z$};
 \node[above] at (c) {$\scriptstyle y$};
 \draw (a) edge[-latex,leftred] node[below] {$\scriptstyle a-1$}(b);
 \draw (b) edge[latex-,leftred] node[below] {$\scriptstyle a$}(c);
\end{tikzpicture}
-
\begin{tikzpicture}[baseline=-.65ex,scale=.7]
 \node[int] (a) at (0,0) {};
 \node[int] (b) at (1,0) {};
 \node[int] (c) at (2,0) {};
 \node[above] at (a) {$\scriptstyle x$};
 \node[above] at (b) {$\scriptstyle z$};
 \node[above] at (c) {$\scriptstyle y$};
 \draw (a) edge[latex-,leftred] node[below] {$\scriptstyle a-1$}(b);
 \draw (b) edge[-latex,leftred] node[below] {$\scriptstyle a$}(c);
\end{tikzpicture}
\right)
\end{equation}
\begin{equation}
\begin{tikzpicture}[baseline=-.65ex,scale=.7]
 \node[int] (a) at (0,0) {};
 \node[int] (c) at (1,0) {};
 \node[above] at (a) {$\scriptstyle x$};
 \node[above] at (c) {$\scriptstyle y$};
 \draw (a) edge[leftred] (c);
 \draw (.5,.15) edge (.5,-.15);
\end{tikzpicture}
:=\frac{1}{2}\left(
\begin{tikzpicture}[baseline=-.65ex,scale=.7]
 \node[int] (a) at (0,0) {};
 \node[int] (b) at (1,0) {};
 \node[int] (c) at (2,0) {};
 \node[above] at (a) {$\scriptstyle x$};
 \node[above] at (b) {$\scriptstyle z$};
 \node[above] at (c) {$\scriptstyle y$};
 \draw (a) edge[-latex,leftred] node[below] {$\scriptstyle a-1$}(b);
 \draw (b) edge[latex-,leftred] node[below] {$\scriptstyle a$}(c);
\end{tikzpicture}
+
\begin{tikzpicture}[baseline=-.65ex,scale=.7]
 \node[int] (a) at (0,0) {};
 \node[int] (b) at (1,0) {};
 \node[int] (c) at (2,0) {};
 \node[above] at (a) {$\scriptstyle x$};
 \node[above] at (b) {$\scriptstyle z$};
 \node[above] at (c) {$\scriptstyle y$};
 \draw (a) edge[latex-,leftred] node[below] {$\scriptstyle a-1$}(b);
 \draw (b) edge[-latex,leftred] node[below] {$\scriptstyle a$}(c);
\end{tikzpicture}\right)
\end{equation}
In the picture we drew orientations in one extra color $c\leq k$ as an example. So, new edges also have colored orientation for colors $c\leq k$.

A convention of numbering original vertices and edges is necessary to be strict with signs. Recall that we have fixed intrinsic orientation to the orientation in the color $k+1$. Let all vertices in the middle of skeleton edges ($z$) come after all other vertices, all edges within them came after all other edges, and two edges in a skeleton edge come one after the other. Note that the dotted edge has a new ``intrinsic'' orientation heading from the edge $a-1$ towards the edge $a$. If edges are odd (even $n$) the change of the intrinsic orientation of dotted edges changes the sign, so dotted orientation is odd, and we have to consider it for the sign. For $n$ odd edges are even so dotted orientation is even, making it unnecessarily to draw simple arrows. Dotted edges themselves are of the same parity as vertices (even for $n$ even and odd for $n$ odd), i.e.\ switching them changes sign for $n$ odd.

It is easily seen that
\begin{equation}
\delta(\,
\begin{tikzpicture}[baseline=-.65ex,scale=.7]
 \node[int] (a) at (0,0) {};
 \node[int] (c) at (1,0) {};
 \draw (a) edge[dotted,->] (c);
\end{tikzpicture}\,
)=\,
\begin{tikzpicture}[baseline=-.65ex,scale=.7]
 \node[int] (a) at (0,0) {};
 \node[int] (c) at (1,0) {};
 \draw (a) edge[-latex] (c);
\end{tikzpicture}
\,-(-1)^n\,
\begin{tikzpicture}[baseline=-.65ex,scale=.7]
 \node[int] (a) at (0,0) {};
 \node[int] (c) at (1,0) {};
 \draw (a) edge[latex-] (c);
\end{tikzpicture},
\end{equation}
\begin{equation}
\delta(\,
\begin{tikzpicture}[baseline=-.65ex,scale=.7]
 \node[int] (a) at (0,0) {};
 \node[int] (c) at (1,0) {};
 \draw (a) edge (c);
 \draw (.5,.15) edge (.5,-.15);
\end{tikzpicture}\,
)=0.
\end{equation}
Therefore, the complexes split as
\begin{equation}
\mO_{k+1}\GC_n=\mO_{k+1}\sGC_n'\oplus\mO_{k+1}\fGCc_n^{\dagger}\qquad\text{for }k\geq 0,
\end{equation}
\begin{equation}
\mO_{k+1}\fGCc_n^{\circ\nrightarrow}=\mO_{k+1}\sGC_n^{\circ}\oplus\mO_{k+1}\fGCc_n^{\circ\dagger}\qquad\text{for }k\geq 1,
\end{equation}
where the \emph{special oriented graph complexes} $\mO_{k+1}\sGC_n'\subset\mO_{k+1}\GC_n$ and $\mO_{k+1}\sGC_n^\circ\subset\mO_{k+1}\fGCc_n^{\circ\nrightarrow}$ are the sub-complexes spanned by graphs whose skeleton edges are
$\begin{tikzpicture}[baseline=-.65ex,scale=.5]
 \node[int] (a) at (0,0) {};
 \node[int] (c) at (1,0) {};
 \draw (a) edge[-latex] (c);
\end{tikzpicture}$ or
$\begin{tikzpicture}[baseline=-.65ex,scale=.5]
 \node[int] (a) at (0,0) {};
 \node[int] (c) at (1,0) {};
 \draw (a) edge[dotted,->] (c);
\end{tikzpicture}$, and $\mO_{k+1}\fGCc_n^{\dagger}\subset\mO_{k+1}\GC_n$ and $\mO_{k+1}\fGCc_n^{\circ\dagger}\subset\mO_{k+1}\fGCc_n^{\circ\nrightarrow}$ are the sub-complexes spanned by graphs that has at least one skeleton edge of the different kind.

\begin{prop}
\label{prop:3}
\begin{enumerate}
\item[]
\item For $k\geq 0$ it holds that $H\left(\mO_{k+1}\GC_n\right)=H\left(\mO_{k+1}\sGC_n'\right)$,
\item For $k\geq 1$ it holds that $H\left(\mO_{k+1}\fGCc_n^{\circ\nrightarrow}\right)=H\left(\mO_{k+1}\sGC_n^{\circ}\right)$.
\end{enumerate}
\end{prop}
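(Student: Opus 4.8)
The plan is to mimic the structure of the two preceding propositions: exhibit each of the two stated equalities as a consequence of the acyclicity of the ``dagger'' complement, and prove that acyclicity by a spectral sequence whose first differential is, up to sign, the map $\delta$ acting on a single crossed skeleton edge, which by the computation $\delta\left(\begin{tikzpicture}[baseline=-.65ex,scale=.5]\node[int](a)at(0,0){};\node[int](c)at(1,0){};\draw(a)edge(c);\draw(.5,.15)edge(.5,-.15);\end{tikzpicture}\right)=0$ and the identity expressing the crossed edge as a combination of the two length-two skeleton edges, is the combinatorial model of a contractible piece. Concretely, for part (1) the decomposition $\mO_{k+1}\GC_n=\mO_{k+1}\sGC_n'\oplus\mO_{k+1}\fGCc_n^{\dagger}$ is one of direct sum of complexes (already established in the excerpt), so it suffices to show $H\left(\mO_{k+1}\fGCc_n^{\dagger}\right)=0$, and likewise for part (2) it suffices to show $H\left(\mO_{k+1}\fGCc_n^{\circ\dagger}\right)=0$, using $\mO_{k+1}\fGCc_n^{\circ\nrightarrow}=\mO_{k+1}\sGC_n^{\circ}\oplus\mO_{k+1}\fGCc_n^{\circ\dagger}$.

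First I would pass, via the reduction of Section~2.6, to the version of $\mO_{k+1}\fGCc_n^{\dagger}$ with distinguishable vertices, so that the contracting/deleting parts of $\delta$ that change the vertex count can be filtered away cleanly; the homology of the distinguishable-vertex complex determines that of the original. Next I set up a spectral sequence filtering by the total number of edges that do \emph{not} lie inside a crossed skeleton edge (equivalently, a ``length'' statistic on the complement of the crossed edges); this filtration is bounded on each sub-complex of fixed loop number $b$, so the spectral sequence converges correctly by the splitting discussion in the excerpt. The induced first differential acts only within the crossed skeleton edges, and because $\begin{tikzpicture}[baseline=-.65ex,scale=.5]\node[int](a)at(0,0){};\node[int](c)at(1,0){};\draw(a)edge(c);\draw(.5,.15)edge(.5,-.15);\end{tikzpicture}$ is a $\delta$-closed combination while the uncrossed length-two edge is $\delta$ of the dotted edge, the first page differential is, in appropriate coordinates, the acyclic de Rham-type differential on the ``crossed vs.\ dotted+arrow'' two-term complex attached to each such skeleton edge. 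The homotopy is the expected one: on a graph that has at least one crossed skeleton edge, replace the first crossed skeleton edge (in the fixed ordering of edges) by a dotted one, extended/summed appropriately over the crossed edges, so that $h\delta+\delta h=\mathrm{id}$ on the first page; I would check the standard sign bookkeeping using the conventions fixed in Section~3 (intrinsic orientation $=$ orientation of color $k+1$, the parities of dotted and crossed orientations recorded there).

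For part (2) the only change is that we restrict to graphs all of whose skeleton vertices are $2$-valent (the $\circ$ condition); the same filtration and homotopy apply verbatim, the point being — exactly as in the proof of the previous proposition — that for $k\geq1$ such a graph still has at least one skeleton (non-weakly-passing) vertex because a colored cycle is forbidden, so the ``crossed skeleton edges'' are genuine edges of the skeleton and the homotopy is well defined; the degenerate $k=0$ case is excluded precisely because there are no colors to obstruct a cycle and hence no skeleton vertices, mirroring the remark made for part (2) of the preceding proposition.

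The main obstacle I anticipate is not the spectral-sequence machinery, which is routine given the earlier sections, but verifying that the homotopy ``extend a crossed edge to a dotted edge, summed over crossed edges'' is genuinely a chain homotopy for the first-page differential with the correct signs — in particular that the terms where the differential acts on an edge \emph{outside} the crossed skeleton edges, or creates a new vertex by contraction, either lie in higher filtration (so do not appear on the first page) or cancel in pairs, and that the factor $\tfrac12$ in the definitions of the dotted and crossed edges does not spoil exactness. One should also double-check that contracting an edge never turns a dagger graph into a non-dagger graph in a way that breaks the direct-sum splitting; this is the content of the claim, already granted in the excerpt, that $\mO_{k+1}\fGCc_n^{\dagger}$ (resp.\ $\mO_{k+1}\fGCc_n^{\circ\dagger}$) is a sub-complex, and I would simply invoke it.
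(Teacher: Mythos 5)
Your reduction to the acyclicity of $\mO_{k+1}\fGCc_n^{\dagger}$ and $\mO_{k+1}\fGCc_n^{\circ\dagger}$ matches the paper, but the core of your argument has a genuine gap: you treat the dagger complexes as if they were generated only by graphs containing a crossed length-two skeleton edge, whereas by definition a graph lies in $\mO_{k+1}\fGCc_n^{\dagger}$ as soon as one of its skeleton edges is neither solid nor dotted, and this includes every skeleton edge of length $\geq 3$. Those long skeleton edges are not a side issue; they are the other half of the cancellation. The paper filters by the number of \emph{skeleton} vertices, so that the induced first differential $\delta_0$ is the part of $\delta$ acting inside a skeleton edge, shortening it by one; it sends a length-$3$ skeleton edge to $\pm$ the crossed edge and the crossed edge to $0$, and acyclicity follows from the homotopy that \emph{lengthens} skeleton edges, pairing the crossed edge with length $3$, length $3$ with length $4$, and so on. Your filtration by the number of edges not lying in a crossed skeleton edge does not produce a usable first differential: since the crossed edge is $\delta$-closed, the part of $\delta$ that ``acts only within crossed skeleton edges'' is identically zero, so your first page carries no differential and cannot exhibit acyclicity.

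Relatedly, your proposed homotopy ``replace a crossed skeleton edge by a dotted one'' is not a chain homotopy for any differential in play: $\delta_0$ applied to the dotted edge is a combination of the two solid edges, not the crossed edge, and the two-term complex ``dotted $\to$ solid'' you invoke lives entirely inside the special summand $\mO_{k+1}\sGC_n'$, not inside the dagger complement whose acyclicity is needed (nor is it acyclic --- its homology is precisely what the remainder of the paper computes). To repair the argument, filter by the number of skeleton vertices (correct convergence follows from the loop-number splitting exactly as you say), identify $\delta_0$ as the length-reducing map on each skeleton edge, and take as homotopy the length-increasing map summed over skeleton edges. Your observation for part (2), that for $k\geq 1$ the no-colored-cycle condition guarantees at least one skeleton vertex so the same argument applies, is correct and agrees with the paper.
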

\begin{proof}
It is enough to show that $\mO_{k+1}\fGCc_n^{\dagger}$ for $k\geq 0$ and $\mO_{k+1}\fGCc_n^{\circ\dagger}$ for $k\geq 1$ are acyclic. We do that in the first case, the second one having the same proof.

On $\mO_{k+1}\fGCc_n^\dagger$ we first set a spectral sequence on the number of skeleton vertices, such that the first $\delta_0$ differential does not change the skeleton. After splitting the complex into the direct sum of complexes with fixed loop number $b=e-v$, the spectral sequences are finite in each degree, so they converge correctly, and hence the spectral sequence for the whole complex converges correctly too.

On the first page, i.e.\ on the complex $(\mO_{k+1}\fGCc_n^\dagger,\delta_0)$, the differential deletes an edge at the end of the skeleton edge (deleting in the middle would produce a passing vertex), i.e.\ it decreases a length of an edge in skeleton by 1, if it is not already 1 (deleting such an edge would change the skeleton). One can check that any skeleton edge of length 3 is mapped to $\pm
\begin{tikzpicture}[baseline=-.65ex,scale=.5]
 \node[int] (a) at (0,0) {};
 \node[int] (c) at (1,0) {};
 \draw (a) edge (c);
 \draw (.5,.15) edge (.5,-.15);
\end{tikzpicture}
$, and it is mapped to 0. There is a homotopy that does the opposite, so the first page is acyclic. Hence the result.
\end{proof}

\subsection{Alternative definition}

From now on, we will often understand $\mO_{k+1}\sGC_n'$ and its sub-complexes in an equivalent way: $\mO_{k+1}\sGC_n'$ is the graph complex with two kind of edges, \emph{solid edges}
$\begin{tikzpicture}[baseline=-.65ex,scale=.5]
 \node[int] (a) at (0,0) {};
 \node[int] (c) at (1,0) {};
 \draw (a) edge[-latex] (c);
\end{tikzpicture}$
with orientations in colors $c\leq k+1$ and \emph{dotted edges}
$\begin{tikzpicture}[baseline=-.65ex,scale=.5]
 \node[int] (a) at (0,0) {};
 \node[int] (c) at (1,0) {};
 \draw (a) edge[dotted,->] (c);
\end{tikzpicture}$ with orientations in colors $c\leq k$. Dotted edges are of the same parity as vertices (even for $n$ even and odd for $n$ odd) and have intrinsic orientation of the opposite parity. We write the color $k$ in black and skip the other colors if it does not make the confusion. The intrinsic orientation is depicted by the simple black arrow as usual. There are no cycles in any color along any kind of edges. Additionally, all vertices have to be either at least 3-valent, or not passing in one color $c\leq k$, and at least one has to be at least 3-valent. The strict definition, similar to the one from Section \ref{s:gc}, is left to the reader. The differential contracts solid edge and maps
\begin{equation}
\label{def:d0}
\delta_0:
\begin{tikzpicture}[baseline=-.65ex,scale=.7]
 \node[int] (a) at (0,0) {};
 \node[int] (c) at (1,0) {};
 \draw (a) edge[dotted,->] (c);
\end{tikzpicture}\,
\mapsto\,
\begin{tikzpicture}[baseline=-.65ex,scale=.7]
 \node[int] (a) at (0,0) {};
 \node[int] (c) at (1,0) {};
 \draw (a) edge[-latex] (c);
\end{tikzpicture}
\,-(-1)^n\,
\begin{tikzpicture}[baseline=-.65ex,scale=.7]
 \node[int] (a) at (0,0) {};
 \node[int] (c) at (1,0) {};
 \draw (a) edge[latex-] (c);
\end{tikzpicture}\,,
\end{equation}
while preserving the orientation in all other colors. There is a similar definition for $\mO_{k+1}\sGC_n^\circ$, but here all vertices have to be 2-valent.

\subsection{Tadpoles and multiple edges}
\label{ss:tadmul}
Tadpole is an edge that starts and ends at the same vertex, and multiple edge is the set of more than one edges that connect the same two vertices. In $\mO_{k+1}\sGC_n'$ we do not care of the type of an edge while talking about tadpoles and multiple edges.

In $\mO_{k+1}\sGC_n'$ there can be no tadpole of a solid edge by definition. There can not be a dotted edge if $k\geq 1$ either, because dotted edge has an orientation in colors $c\leq k$ and it would make a colored cycle.

For $n$ even there can be no tadpoles of dotted edge because of symmetry reasons. Multiple dotted edges, with possibly one solid edge, are possible, but they will not be a problem later in the paper, so we define
\begin{equation}
\label{def:DGCeven}
\mO_{k+1}\sGC_n:=\mO_{k+1}\sGC_n'\quad\text{for $n$ even.}
\end{equation}

For $n$ odd and $k=0$ a dotted tadpole is possible. It can not be destroyed by the differential, so the graphs with at least one dotted tadpole span a sub-complex. We define
\begin{equation}
\label{def:notadp}
\mO_1\sGC_n^\notadp\quad\text{for $n$ odd}
\end{equation}
to be the quotient of the complex $\mO_1\sGC_n'$ with the complex spanned by graphs with at least one tadpole.

\begin{prop}
\label{prop:notadp}
For $n$ odd the quotient
$$
\mO_1\sGC_n'\rightarrow\mO_1\sGC_n^\notadp
$$
is a quasi-isomorphism.
\end{prop}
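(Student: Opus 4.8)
The plan is to show that the kernel of the quotient map is acyclic. For $n$ odd the kernel is the subcomplex $T\subset\mO_1\sGC_n'$ spanned by graphs with at least one tadpole: a solid tadpole is impossible, since it would be a cycle in the colour $1$, so every tadpole is dotted, and $T$ is genuinely a subcomplex because $\delta_0$ sends a dotted tadpole to a solid tadpole, hence to $0$, and contracting a solid edge can never remove a dotted tadpole. So it suffices to prove $H(T)=0$.

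I would first split $T$ by the loop number $b=e-v$; in each summand all vertices are at least $3$-valent, so $v\le 2b$ and the degree is bounded, which ensures correct convergence of all spectral sequences below. On the summand with fixed $b$ I set up the spectral sequence associated to the filtration by the number of vertices $v$. The contracting part $\delta_c$ of the differential strictly lowers $v$, whereas the part $\delta_0$ of \eqref{def:d0} preserves it, so the first page is $(T,\delta_0)$. Here $\delta_0$ is a sum of edge-local operators: on a non-tadpole dotted edge it replaces that edge by $\to-(-1)^n\leftarrow$, and on dotted tadpoles it is zero. A K\"unneth-type argument over the non-tadpole edge slots — the local complex at such a slot is spanned by the dotted edge and its two solid orientations, with $\delta_0(\text{dotted})=\to-(-1)^n\leftarrow$, which is acyclic if one of the two orientations is excluded by a colour-$1$ cycle and one-dimensional otherwise — identifies $E_1=H(T,\delta_0)$ with a subquotient of the complex of graphs in $T$ all of whose non-tadpole edges are solid, carrying the induced differential $\delta_c$ (contraction of solid edges).

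It then remains to show that $(E_1,\delta_c)$ is acyclic. The decisive feature is rigidity near a fixed tadpole: contracting a solid edge incident to a tadpole-carrying vertex $x$ either merges $x$ with a new vertex, carrying the tadpole along, or produces a solid tadpole or two dotted tadpoles at one vertex, which for $n$ odd is $0$. I would exhibit a contracting homotopy of "vertex expansion" type, dual to $\delta_c$: split a chosen vertex into two vertices joined by a new solid edge, summing over the ways of distributing the incident half-edges (the tadpole staying on one side), with the usual signs. The main obstacle I expect is precisely the verification that $\delta_c h+h\delta_c=\mathrm{id}$ on $E_1$: one must check that the distributions producing a vanishing graph (a solid tadpole, or a double dotted tadpole) contribute boundary terms that cancel in pairs, and that the sign bookkeeping for the odd dotted tadpoles is consistent, in the same spirit as the homotopies used in the proofs of Propositions~\ref{prop:2} and~\ref{prop:3}. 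Once $(E_1,\delta_c)$ is acyclic the spectral sequence degenerates at $E_2=0$, so $H(T)=0$ and $\mO_1\sGC_n'\rightarrow\mO_1\sGC_n^\notadp$ is a quasi-isomorphism.
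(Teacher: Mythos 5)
Your reduction to showing that the subcomplex $T$ spanned by graphs with a (necessarily dotted) tadpole is acyclic is correct, as is the check that $T$ really is a subcomplex. But the route you take from there has two genuine gaps. First, after filtering by the number of vertices you must compute $H(T,\delta_0)$, and the K\"unneth-type argument over edge slots does not apply: the edge-local two-step complexes are coupled by the global no-cycle condition in the colour $1$ (which orientations of a given solid edge are admissible depends on the orientations of all the other edges), so $(T,\delta_0)$ is not a tensor product of local complexes. Computing $H(\cdot,\delta_0)$ on a fixed shape is precisely the hard content of the main theorem --- it occupies the whole chain $\Sigma^0\to\dots\to\Sigma^{v-1}$ in Section~\ref{s:qi} --- and identifying $E_1$ with ``a subquotient of the all-solid graphs'' is an assertion, not a computation. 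Second, even granting that identification, the acyclicity of $(E_1,\delta_c)$ is left unproved: a vertex-expansion operator is in general \emph{not} a contracting homotopy for the edge-contraction differential (otherwise every graph complex would be acyclic), and the identity $\delta_c h+h\delta_c=\mathrm{id}$ that you yourself flag as ``the main obstacle'' is exactly where such arguments live or die; nothing in the proposal makes it work here.

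The paper sidesteps both problems by choosing a filtration whose first differential is \emph{local at the tadpole}: it retains only the part of $\delta$ that changes how the tadpole-carrying vertex is attached to the rest of the graph, namely contraction of the solid edge joining it to its neighbour (with a sign depending on the orientation of that edge) together with $\delta_0$ applied to a dotted edge joining it to its neighbour. On this first page there is an explicit local homotopy, summed over tadpoles: expand the tadpole vertex into the \emph{difference} of the two solid-edge attachments, and send either solid attachment to the dotted one. A four-term check then gives $dh+hd=2\,\mathrm{id}$, hence acyclicity of $T$ directly, without ever computing $H(\cdot,\delta_0)$ globally and without any homotopy for $\delta_c$. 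I would redo the proof along these lines.
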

\begin{proof}
It is enough to show that the complex spanned by graphs with at least one tadpole is acyclic.

On it, we set up a spectral sequence such that the first differential is
$$
\begin{tikzpicture}[baseline=.7ex,every loop/.style={dotted}]
 \node[int] (b) at (0,.5) {};
 \node[int] (a) at (0,0) {};
 \draw (a) edge[-latex] (b);
 \draw (a) edge (0,-.4);
 \draw (a) edge (.2,-.3);
 \draw (a) edge (-.2,-.3);
 \draw (b) edge[loop] (b);
\end{tikzpicture}
\mapsto
\begin{tikzpicture}[baseline=-.7ex,every loop/.style={dotted}]
 \node[int] (a) at (0,0) {};
 \draw (a) edge (0,-.4);
 \draw (a) edge (.2,-.3);
 \draw (a) edge (-.2,-.3);
 \draw (a) edge[loop] (a);
\end{tikzpicture},
\quad
\begin{tikzpicture}[baseline=.7ex,every loop/.style={dotted}]
 \node[int] (b) at (0,.5) {};
 \node[int] (a) at (0,0) {};
 \draw (a) edge[latex-] (b);
 \draw (a) edge (0,-.4);
 \draw (a) edge (.2,-.3);
 \draw (a) edge (-.2,-.3);
 \draw (b) edge[loop] (b);
\end{tikzpicture}
\mapsto\,-
\begin{tikzpicture}[baseline=-.7ex,every loop/.style={dotted}]
 \node[int] (a) at (0,0) {};
 \draw (a) edge (0,-.4);
 \draw (a) edge (.2,-.3);
 \draw (a) edge (-.2,-.3);
 \draw (a) edge[loop] (a);
\end{tikzpicture},
\quad
\begin{tikzpicture}[baseline=.7ex,every loop/.style={dotted}]
 \node[int] (b) at (0,.5) {};
 \node[int] (a) at (0,0) {};
 \draw (a) edge[dotted] (b);
 \draw (a) edge (0,-.4);
 \draw (a) edge (.2,-.3);
 \draw (a) edge (-.2,-.3);
 \draw (b) edge[loop] (b);
\end{tikzpicture}
\mapsto
\begin{tikzpicture}[baseline=.7ex,every loop/.style={dotted}]
 \node[int] (b) at (0,.5) {};
 \node[int] (a) at (0,0) {};
 \draw (a) edge[-latex] (b);
 \draw (a) edge (0,-.4);
 \draw (a) edge (.2,-.3);
 \draw (a) edge (-.2,-.3);
 \draw (b) edge[loop] (b);
\end{tikzpicture}
+
\begin{tikzpicture}[baseline=.7ex,every loop/.style={dotted}]
 \node[int] (b) at (0,.5) {};
 \node[int] (a) at (0,0) {};
 \draw (a) edge[latex-] (b);
 \draw (a) edge (0,-.4);
 \draw (a) edge (.2,-.3);
 \draw (a) edge (-.2,-.3);
 \draw (b) edge[loop] (b);
\end{tikzpicture},
$$
summed over all tadpoles. The precise set up of the filtration and the argument that the spectral sequence converges correctly is left to the reader.

There is a homotopy mapping
$$
\begin{tikzpicture}[baseline=-.7ex,every loop/.style={dotted}]
 \node[int] (a) at (0,0) {};
 \draw (a) edge (0,-.4);
 \draw (a) edge (.2,-.3);
 \draw (a) edge (-.2,-.3);
 \draw (a) edge[loop] (a);
\end{tikzpicture}
\mapsto
\begin{tikzpicture}[baseline=.7ex,every loop/.style={dotted}]
 \node[int] (b) at (0,.5) {};
 \node[int] (a) at (0,0) {};
 \draw (a) edge[-latex] (b);
 \draw (a) edge (0,-.4);
 \draw (a) edge (.2,-.3);
 \draw (a) edge (-.2,-.3);
 \draw (b) edge[loop] (b);
\end{tikzpicture}
-
\begin{tikzpicture}[baseline=.7ex,every loop/.style={dotted}]
 \node[int] (b) at (0,.5) {};
 \node[int] (a) at (0,0) {};
 \draw (a) edge[latex-] (b);
 \draw (a) edge (0,-.4);
 \draw (a) edge (.2,-.3);
 \draw (a) edge (-.2,-.3);
 \draw (b) edge[loop] (b);
\end{tikzpicture},
\quad
\begin{tikzpicture}[baseline=.7ex,every loop/.style={dotted}]
 \node[int] (b) at (0,.5) {};
 \node[int] (a) at (0,0) {};
 \draw (a) edge[-latex] (b);
 \draw (a) edge (0,-.4);
 \draw (a) edge (.2,-.3);
 \draw (a) edge (-.2,-.3);
 \draw (b) edge[loop] (b);
\end{tikzpicture}
\mapsto
\begin{tikzpicture}[baseline=.7ex,every loop/.style={dotted}]
 \node[int] (b) at (0,.5) {};
 \node[int] (a) at (0,0) {};
 \draw (a) edge[dotted] (b);
 \draw (a) edge (0,-.4);
 \draw (a) edge (.2,-.3);
 \draw (a) edge (-.2,-.3);
 \draw (b) edge[loop] (b);
\end{tikzpicture},
\quad
\begin{tikzpicture}[baseline=.7ex,every loop/.style={dotted}]
 \node[int] (b) at (0,.5) {};
 \node[int] (a) at (0,0) {};
 \draw (a) edge[latex-] (b);
 \draw (a) edge (0,-.4);
 \draw (a) edge (.2,-.3);
 \draw (a) edge (-.2,-.3);
 \draw (b) edge[loop] (b);
\end{tikzpicture}
\mapsto
\begin{tikzpicture}[baseline=.7ex,every loop/.style={dotted}]
 \node[int] (b) at (0,.5) {};
 \node[int] (a) at (0,0) {};
 \draw (a) edge[dotted] (b);
 \draw (a) edge (0,-.4);
 \draw (a) edge (.2,-.3);
 \draw (a) edge (-.2,-.3);
 \draw (b) edge[loop] (b);
\end{tikzpicture},
$$
summed over all tadpoles, showing that the first differential makes the complex acyclic. That concludes the proof.
\end{proof}

Let $\mO_{k+1}\sGC_n^\notadp:=\mO_{k+1}\sGC_n'$ for $k\geq 1$. In $\mO_{k+1}\sGC_n^\notadp$ there are multiple solid edges, possibly with one dotted edge. The differential can not destroy a multiple edge, so graphs with at least one of them span the sub-complex.

\begin{rem}
For odd $n$ the double edge
\begin{tikzpicture}[baseline=-.65ex,scale=.7]
 \node[int] (a) at (0,0) {};
 \node[int] (b) at (1,0) {};
 \draw (a) edge[bend left=20,dotted] (b);
 \draw (a) edge[bend right=20,-latex] (b);
\end{tikzpicture}
can be destroyed in $\mO_1\sGC_n'$ by making a dotted tadpole. This is the very reason why do we switch to $\mO_1\sGC_n^\notadp$ first.
\end{rem}

Now we define
\begin{equation}
\label{def:sGCodd}
\mO_{k+1}\sGC_n\quad\text{for $n$ odd}
\end{equation}
to be the quotient of the complex $\mO_{k+1}\sGC_n^\notadp$ with the complex spanned by graphs with at least one multiple edge.

\begin{prop}
\label{prop:nomul}
For $k\geq 0$ and $n$ odd the quotient
$$
\mO_{k+1}\sGC_n^\notadp\rightarrow\mO_{k+1}\sGC_n
$$
is a quasi-isomorphism.
\end{prop}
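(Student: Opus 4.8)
The plan is to mimic the structure of Proposition~\ref{prop:notadp}: it suffices to show that the subcomplex $M\subset\mO_{k+1}\sGC_n^\notadp$ spanned by graphs with at least one multiple edge is acyclic. The key point is that for $n$ odd a multiple \emph{solid} edge forces an orientation cycle (two parallel solid edges heading in consistent directions in any given colour), so a multiple edge in $\mO_{k+1}\sGC_n^\notadp$ consists of exactly two edges, one of which must be dotted; and there can be at most one such double edge between a fixed pair of vertices. Concretely the only configuration contributing is a pair of vertices joined by one solid edge and one dotted edge (higher multiplicities being zero by the no-cycle condition together with edge-antisymmetry for $n$ odd), so $M$ is spanned by graphs containing at least one such solid–dotted double edge.

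First I would set up a spectral sequence on $M$ filtered by, say, the number of edges that do \emph{not} belong to a double edge (equivalently, after splitting into fixed loop number $b=e-v$ so that everything is finite-dimensional in each degree and the spectral sequence converges correctly, as in the earlier propositions). The induced first differential $\delta_0$ acts only on the solid–dotted double edges: applying $\delta_0$ from \eqref{def:d0} to the dotted edge of such a pair replaces $\begin{tikzpicture}[baseline=-.65ex,scale=.5]\node[int](a)at(0,0){};\node[int](b)at(1,0){};\draw(a)edge[bend left=20,dotted](b);\draw(a)edge[bend right=20,-latex](b);\end{tikzpicture}$ by a pair of parallel solid edges with opposite orientations, which is a double \emph{solid} edge and hence $0$ in $\mO_{k+1}\sGC_n^\notadp$ for $n$ odd; contracting the solid edge of the pair would create a tadpole (dotted if $k=0$, or an orientation cycle if $k\ge1$) and is likewise $0$. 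So on the associated graded, the double edge behaves exactly like the tadpole in Proposition~\ref{prop:notadp}: $\delta_0$ turns the dotted member into something that dies, and there is nothing else it can do to the double edge locally. I would then exhibit the homotopy that runs in reverse — turning the solid member of a double edge into a dotted one (summed over all double edges), together with the auxiliary moves on the adjacent solid/dotted state exactly as in the displayed homotopy of Proposition~\ref{prop:notadp} — and check that $\delta_0 h + h\delta_0 = \mathrm{id}$ on $(M,\delta_0)$. This makes the first page acyclic, hence $M$ is acyclic, hence the quotient map is a quasi-isomorphism.

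The step I expect to be the main obstacle is the bookkeeping of signs and the verification that the homotopy is well defined on isomorphism classes. For $n$ odd both vertices and dotted-edge intrinsic orientations are odd, solid edges are even, and dotted edges carry an intrinsic orientation of even parity relative to themselves; one must make sure the $1/2$-normalized definition \eqref{def:dotted} of dotted edges is respected and that summing ``over all double edges'' is compatible with the symmetry identifications (in particular that a graph possessing an automorphism swapping the two members of a double edge — which cannot happen here since one edge is solid and one dotted, but symmetries permuting several disjoint double edges can — does not cause the homotopy to vanish or double-count). I would also need to double-check the claim that no multiple edge of multiplicity $\ge 3$, and no double edge of two solid edges, survives in $\mO_{k+1}\sGC_n^\notadp$ for $n$ odd, since the whole argument rests on the only surviving multiple edge being the solid–dotted pair. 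Once these combinatorial and sign checks are in place, the spectral sequence / homotopy argument is routine and parallels Proposition~\ref{prop:notadp} almost verbatim.
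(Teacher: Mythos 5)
Your overall strategy --- reduce to showing the multiple-edge subcomplex is acyclic, filter so that the first differential $\delta_0$ acts only on a multiple edge, then argue local acyclicity --- is the same as the paper's. But the local analysis contains a genuine error, precisely at the point you flagged for double-checking. You claim that for $n$ odd a multiple solid edge is forbidden, so that the only surviving multiple edge is a single solid--dotted pair. This is backwards: two parallel solid edges pointing in \emph{opposite} directions in some colour form a $2$-cycle and are excluded, so all members of a multiple edge must agree in every colour; and for $n$ odd edges are \emph{even} under permutation, so $m$ parallel solid edges with identical orientations do not cancel and survive for every $m\geq 2$. (It is for $n$ \emph{even} that edge-antisymmetry kills them --- which is exactly why the paper needs the quotient only in the odd case.) The surviving local configurations are therefore $m$ solid edges, or $m-1$ solid edges plus at most one dotted edge (two parallel dotted edges do cancel, since dotted edges are odd for $n$ odd), in each of the two possible $k{+}1$-orientations.

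This falsifies the next step as well: applying $\delta_0$ to the dotted member of a solid--dotted pair gives, for $n$ odd, the sum of a solid edge agreeing with the existing solid edge and one opposing it; the opposing term dies by the no-cycle condition, but the agreeing term is a nonzero double solid edge, not zero. So the double edge does not behave like the tadpole of Proposition \ref{prop:notadp}, and the tadpole-style homotopy you propose does not close up. The correct conclusion is still acyclicity, but for a different reason: on the relevant page $\delta_0$ maps the state ``$m-1$ solid $+$ one dotted'' isomorphically onto the state ``$m$ solid'' (separately for each orientation and each colouring in $c\leq k$), so the local complex is a sum of acyclic two-term complexes. This is the paper's argument. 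Your filtration also needs adjustment: the paper first filters by the number of skeleton vertices to isolate $\delta_0$, passes to distinguishable vertices, and only then filters by the number of dotted edges outside the multiple edge; filtering by ``edges not in a double edge'' does not by itself separate $\delta_0$ acting on the multiple edge from $\delta_0$ acting elsewhere.
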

\begin{proof}
It is enough to show that the complex spanned by graphs with at least one multiple edge is acyclic. On it we set up a spectral sequence on the number of skeleton vertices, such that $\delta_0$ is the first differential.
After splitting the complex into the direct product of complexes with fixed loop number, the spectral sequences are finite in each degree, so they converge correctly, and hence the spectral sequence for the whole complex too.

Since $\delta_0$ does not change the number of vertices, the homology commutes with permuting vertices and to show that the complex is acyclic it is enough to show that the complex with distinguishable vertices is acyclic. So from now on we distinguish them.

On the first page we set up another spectral sequences, on the number of dotted edges that are not in the multiple edge. This spectral sequence is finite and converges correctly. The first differential of this spectral sequence is the one that acts only on an multiple edge.

Symmetry reasons reduce possibilities of multiple edges to
$$
\begin{tikzpicture}[baseline=-.65ex]
 \node[int] (a) at (0,0) {};
 \node[int] (b) at (1,0) {};
 \draw (a) edge[very thick,-latex] node[above] {$\scriptstyle m$} (b);
\end{tikzpicture}
\,,\quad
\begin{tikzpicture}[baseline=-.65ex]
 \node[int] (a) at (0,0) {};
 \node[int] (b) at (1,0) {};
 \draw (a) edge[very thick,latex-] node[above] {$\scriptstyle m$} (b);
\end{tikzpicture}
\,,\quad
\begin{tikzpicture}[baseline=-.65ex]
 \node[int] (a) at (0,0) {};
 \node[int] (b) at (1,0) {};
 \draw (a) edge[very thick,-latex, bend left=20] node[above] {$\scriptstyle m-1$} (b);
 \draw (a) edge[dotted, bend right=20] (b);
\end{tikzpicture}
\,,\quad\text{and}\quad
\begin{tikzpicture}[baseline=-.65ex]
 \node[int] (a) at (0,0) {};
 \node[int] (b) at (1,0) {};
 \draw (a) edge[very thick,latex-, bend left=20] node[above] {$\scriptstyle m-1$} (b);
 \draw (a) edge[dotted, bend right=20] (b);
\end{tikzpicture}
$$
for $m\geq 2$ where the thick arrowed line with a number $l$ represent $l$ solid edges with the orientation of the arrow. Additionally, edges in the multiple edge have an orientation in every color $c\leq k$. All edges have it the same because of the no-loop condition, so we can talk about the orientation of a multiple edge.

The differential maps:
$$
\begin{tikzpicture}[baseline=-.65ex]
 \node[int] (a) at (0,0) {};
 \node[int] (b) at (1,0) {};
 \draw (a) edge[very thick,-latex, bend left=20] node[above] {$\scriptstyle m-1$} (b);
 \draw (a) edge[dotted, bend right=20] (b);
\end{tikzpicture}
\mapsto
\begin{tikzpicture}[baseline=-.65ex]
 \node[int] (a) at (0,0) {};
 \node[int] (b) at (1,0) {};
 \draw (a) edge[very thick,-latex] node[above] {$\scriptstyle m$} (b);
\end{tikzpicture}\,,
\quad
\begin{tikzpicture}[baseline=-.65ex]
 \node[int] (a) at (0,0) {};
 \node[int] (b) at (1,0) {};
 \draw (a) edge[very thick,latex-, bend left=20] node[above] {$\scriptstyle m-1$} (b);
 \draw (a) edge[dotted, bend right=20] (b);
\end{tikzpicture}
\mapsto
\begin{tikzpicture}[baseline=-.65ex]
 \node[int] (a) at (0,0) {};
 \node[int] (b) at (1,0) {};
 \draw (a) edge[very thick,latex-] node[above] {$\scriptstyle m$} (b);
\end{tikzpicture}\,,
$$
leaving the orientation of colors $c\leq k$ the same.
The complex with this differential is clearly acyclic, concluding the proof.
\end{proof}

\section{The construction of the quasi-isomorphism}
\label{s:qi}

In this section we prove Theorem \ref{thm:main}, that there is a quasi-isomorphism $\mO_k\GC_n\rightarrow\mO_{k+1}\GC_{n+1}$ for $k\geq 0$.

\subsection{The map}
\label{ss:map}

Let $k\geq 0$ and $\Gamma\in\mO_k\GC_n$ be a graph. We call a \emph{spanning tree} of $\Gamma$ a connected sub-graph without cycles which contains all its vertices. Let $S(\Gamma)$ be the set of all spanning trees of $\Gamma$.

For a chosen vertex $x\in V(\Gamma)$ and spanning tree $\tau\in S(\Gamma)$ we define $h_{x,\tau}(\Gamma)\in\mO_{k+1}\GC_{n+1}$ as follows.
The skeleton of $h_{x,\tau}(\Gamma)$ is isomorphic to $\Gamma$, edges which are in $\tau$ are mapped to
$\begin{tikzpicture}[baseline=-.65ex,scale=.5]
 \node[int] (a) at (0,0) {};
 \node[int] (c) at (1,0) {};
 \draw (a) edge[-latex] (c);
\end{tikzpicture}$
with the orientation away from the vertex $x$ in the color $k+1$ and edges which are not in $\tau$ are mapped to
$\begin{tikzpicture}[baseline=-.65ex,scale=.5]
 \node[int] (a) at (0,0) {};
 \node[int] (c) at (1,0) {};
 \draw (a) edge[->,dotted] (c);
\end{tikzpicture}$, \eqref{def:dotted}, with preserved intrinsic orientation. Orientations in other colors $c\leq k$ are preserved.

In order to precisely define the sign, we make a convention that vertices of $h_{x,\tau}(\Gamma)$ take labels from edges in $\Gamma$ and edges of $h_{x,\tau}(\Gamma)$ take labels from vertices in $\Gamma$ in the following sense.

Recall that vertices are labeled with numbers starting from $0$ and edges are labeled with numbers starting from $1$.
First we permute vertices of $\Gamma$ such that the chosen vertex $x$ is the vertex $0$. That vertex in $h_{x,\tau}(\Gamma)$ is labeled also $0$. Other vertices in the skeleton of $h_{x,\tau}(\Gamma)$ are labeled by the former label (in $\Gamma$) of the edge heading towards it. Vertices in the middle of a dotted skeleton edge are labeled by the former label of that edge. So, all vertices take label $0$, and labels of all edges from $\Gamma$.

Solid edges in the skeleton of $h_{x,\tau}(\Gamma)$ are labeled with the former label of the vertex to which they head. The following numbers are used to label remaining edges: two edges in dotted skeleton edges are labeled with two consecutive numbers, such that lower number is taken by the edge at the intrinsic tail of the former edge (in $\Gamma$). The order of the pairs of edges in each dotted edge does not matter in any parity case.

Everything gets a pre-factor $(-1)^{nr}$ where $r$ is the number of edges in $\tau$ whose orientation in the color $k+1$ is the opposite to the former intrinsic orientation.

The following diagram describes an example of $h_{x,\tau}$.
$$
\begin{tikzpicture}[baseline=1ex]
 \node[int] (a) at (-1,-.5) {};
 \node[int] (b) at (1,-.5) {};
 \node[int] (c) at (0,0) {};
 \node[int] (d) at (0,1.1) {};
 \node[below] at (a) {$\scriptstyle 1$};
 \node[below] at (b) {$\scriptstyle 2$};
 \node[below] at (c) {$\scriptstyle 3$};
 \node[above] at (d) {$\scriptstyle 0$};
 \draw (a) edge[->] node {$\scriptstyle {\mathbf 1}$} (b);
 \draw (a) edge[<-] node {$\scriptstyle {\mathbf 2}$} (c);
 \draw (a) edge[->] node {$\scriptstyle {\mathbf 3}$} (d);
 \draw (b) edge[->] node {$\scriptstyle {\mathbf 4}$} (c);
 \draw (b) edge[<-] node {$\scriptstyle {\mathbf 5}$} (d);
 \draw (c) edge[->] node {$\scriptstyle {\mathbf 6}$} (d);
\end{tikzpicture}\quad
\mxto{h_{0,
\begin{tikzpicture}[baseline=0ex,scale=.1]
 \draw (-1,-.5) edge (0,0);
 \draw (1,-.5) edge (0,0);
 \draw (0,0) edge (0,1.1);
\end{tikzpicture}}}
\quad (-1)^{2n}
\begin{tikzpicture}[baseline=1ex]
 \node[int] (a) at (-1,-.5) {};
 \node[int] (b) at (1,-.5) {};
 \node[int] (c) at (0,0) {};
 \node[int] (d) at (0,1.1) {};
 \node[below] at (a) {$\scriptstyle 2$};
 \node[below] at (b) {$\scriptstyle 4$};
 \node[below] at (c) {$\scriptstyle 6$};
 \node[above] at (d) {$\scriptstyle 0$};
 \draw (a) edge[latex-] node {$\scriptstyle {\mathbf 1}$} (c);
 \draw (b) edge[latex-] node {$\scriptstyle {\mathbf 2}$} (c);
 \draw (c) edge[latex-] node {$\scriptstyle {\mathbf 3}$} (d);
 \draw (a) edge[->,dotted] (b);
 \draw (a) edge[->,dotted] (d);
 \draw (b) edge[<-,dotted] (d);
\end{tikzpicture}
=(-1)^{2n}
\begin{tikzpicture}[baseline=1ex]
 \node[int] (a) at (-1,-.5) {};
 \node[int] (b) at (1,-.5) {};
 \node[int] (c) at (0,0) {};
 \node[int] (d) at (0,1.1) {};
 \node[below] at (a) {$\scriptstyle 2$};
 \node[below] at (b) {$\scriptstyle 4$};
 \node[below] at (c) {$\scriptstyle 6$};
 \node[above] at (d) {$\scriptstyle 0$};
 \draw (a) edge[latex-] node {$\scriptstyle {\mathbf 1}$} (c);
 \draw (b) edge[latex-] node {$\scriptstyle {\mathbf 2}$} (c);
 \draw (c) edge[latex-] node {$\scriptstyle {\mathbf 3}$} (d);
 \node[int] (e) at (0,-.5) {};
 \node[int] (f) at (-.5,.3) {};
 \node[int] (g) at (.5,.3) {};
 \node[below] at (e) {$\scriptstyle 1$};
 \node[above left] at (f) {$\scriptstyle 3$};
 \node[above right] at (g) {$\scriptstyle 5$};
 \draw (a) edge[-latex] node {$\scriptstyle {\mathbf 4}$} (e);
 \draw (b) edge[-latex] node {$\scriptstyle {\mathbf 5}$} (e);
 \draw (a) edge[-latex] node {$\scriptstyle {\mathbf 6}$} (f);
 \draw (d) edge[-latex] node {$\scriptstyle {\mathbf 7}$} (f);
 \draw (b) edge[-latex] node {$\scriptstyle {\mathbf 9}$} (g);
 \draw (d) edge[-latex] node {$\scriptstyle {\mathbf 8}$} (g); 
\end{tikzpicture}+\dots
$$

One can check that the map is well defined (sign change from a permutation or reversing in $\Gamma$ is the same after mapping), cycles are never formed in any color and that the degree of the map is $0$.

Let
\begin{equation}
\label{def:h}
h(\Gamma):=\sum_{x\in V(\Gamma)}(v(x)-2)\sum_{\tau\in S(\Gamma)}h_{x,\tau}(\Gamma),
\end{equation}
where $v(x)$ is the valence of the vertex $x$ in $\Gamma$.

\begin{prop}
The map $h\colon \mO_k\GC_n\rightarrow\mO_{k+1}\GC_{n+1}$ is a map of complexes, i.e.\ $\delta h(\Gamma)=h(\delta\Gamma)$ for every $\Gamma\in\mO_k\GC_n$.
\end{prop}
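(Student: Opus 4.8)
The plan is to work in the target $\mO_{k+1}\GC_{n+1}$ via the observation that the image of $h$ lies in the subcomplex $\mO_{k+1}\sGC_{n+1}'$ — every $h_{x,\tau}(\Gamma)$ has all its skeleton edges either solid or dotted — where, by the alternative description of Section~\ref{s:sogc}, the differential splits as $\delta=\delta_c+\delta_0$ with $\delta_c$ contracting solid edges and $\delta_0$ acting on dotted edges as in \eqref{def:d0}. On graphs in this image call the number of solid edges minus $v-1$ the \emph{excess}; then $h_{x,\tau}(\Gamma)$ has excess $0$, $\delta_c$ preserves excess, $\delta_0$ raises it by $1$, and $h(\delta\Gamma)$ is concentrated in excess $0$ (here $\delta\Gamma=\sum_{t} c_t\Gamma$ since $\mO_k\GC_n$ has no $1$-valent vertices). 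Thus $\delta h(\Gamma)=h(\delta\Gamma)$ splits into the two independent identities
$$\delta_0 h(\Gamma)=0,\qquad \delta_c h(\Gamma)=h(\delta\Gamma),$$
which I would prove in turn.

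\emph{The excess-$1$ vanishing} $\delta_0 h(\Gamma)=0$. A nonzero term of $\delta_0 h(\Gamma)$ is a graph with skeleton $\Gamma$ whose solid edges form a connected spanning subgraph $U$ containing a unique cycle $C$, with a marked vertex $x$ and all other edges dotted. It is produced by $\delta_0$ precisely from those $h_{x,\tau}(\Gamma)$ with $\tau=U\setminus\{e\}$ and $e\in C$, all of which carry the same prefactor $v(x)-2$ (the skeleton is $\Gamma$ throughout). For fixed $x$ and $U$ the terms that actually hit a given colour-$(k+1)$ orientation of $U$ come in pairs, corresponding to two adjacent edges of $C$ for which $\delta_0$ forces the intervening orientation in opposite ways; so the identity reduces to showing that paired contributions have opposite signs. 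This requires tracking how the labelling conventions of $h_{x,\tau}$, the prefactor $(-1)^{nr}$, and the relative sign $-(-1)^{n+1}$ in \eqref{def:d0} combine along the fundamental cycle, and I expect this to be the main obstacle.

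\emph{The excess-$0$ identity} $\delta_c h(\Gamma)=h(\delta\Gamma)$. Expanding, $\delta_c h_{x,\tau}(\Gamma)=\sum_{t\in\tau}\pm\,c_t\bigl(h_{x,\tau}(\Gamma)\bigr)$: contracting a solid tree edge $t$ merges its endpoints, turns $\tau$ into the spanning tree $\tau/t$ of $\Gamma/t$, carries the away-from-$x$ orientation on $\tau$ to the away-from-$x'$ orientation on $\tau/t$ (with $x'$ the image of $x$), and leaves the non-tree edges dotted, so it equals $\pm\,h_{x',\tau/t}(c_t\Gamma)$ — and it vanishes exactly when $c_t\Gamma$ does, since a newly created colour cycle, passing vertex, or all-$2$-valent graph kills both sides at once. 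On the other hand $h(\delta\Gamma)=\sum_{t\in E(\Gamma)}h(c_t\Gamma)$, and since the spanning trees of $c_t\Gamma$ are exactly the $\tau/t$ for spanning trees $\tau\ni t$ of $\Gamma$, the same graphs $h_{x',\tau/t}(c_t\Gamma)$ appear on both sides. If $t$ is not incident to $x$ then $v_\Gamma(x)=v_{c_t\Gamma}(x')$ and the coefficients match directly; if $t=\{x,w\}$ then $h_{x,\tau}(\Gamma)$ contributes only the coefficient $v_\Gamma(x)-2$, but contracting $t$ in $h_{w,\tau}(\Gamma)$ yields the \emph{same} graph — the away-from-$x$ and away-from-$w$ orientations on $\tau$ differ only on the edge $t$ itself — and with the sign conventions of $h$ the two contributions add up to $v_\Gamma(x)+v_\Gamma(w)-4=v_{c_t\Gamma}(x')-2$, while no further term of $h(\Gamma)$ maps to $h_{x',\tau/t}(c_t\Gamma)$ under $\delta_c$.

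The remaining checks are routine: well-definedness and degree $0$ of $h_{x,\tau}$ were noted after \eqref{def:h}, and that $\tau/t$ is a spanning tree of $\Gamma/t$, that no $1$-valent vertices appear, and that no colour cycle is created beyond those already detected by $c_t\Gamma$ follow from the corresponding features of $\delta$ on $\mO_k\GC_n$ and of the complexes in Section~\ref{s:sogc}. What is genuinely delicate is the sign bookkeeping — the compatibility of the labelling conventions with edge contraction and with the $\delta_0$ move, the addition in the $t=\{x,w\}$ case, and above all the pairwise cancellation in the excess-$1$ part — which I would carry out by a local computation on one edge together with its fundamental cycle.
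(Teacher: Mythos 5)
Your proposal is correct and follows essentially the same route as the paper: your excess-$0$ identity is the paper's matching of tree-edge contractions with $h(\delta\Gamma)$ via the bijection $\tau\leftrightarrow c_t(\tau)$ and the valence count $v_\Gamma(x)+v_\Gamma(w)-4=v_{c_t\Gamma}(x')-2$, and your excess-$1$ vanishing is exactly the paper's $N(\Gamma,x)=0$, proved by summing over unicyclic spanning subgraphs (your $U$, the paper's $\sigma\in CT(\Gamma)$) and cancelling contributions of adjacent edges of the fundamental cycle, with the directed-cycle end terms vanishing. The excess grading is a clean reorganization of the paper's case split, and both arguments defer the same final sign check.
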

\begin{proof}
It holds that
$$
 h(\delta\Gamma)=h\left(\sum_{t\in E(\Gamma)}c_t(\Gamma)\right)=\sum_{t\in E(\Gamma)}h\left(c_t(\Gamma)\right)=
 \sum_{t\in E(\Gamma)}\sum_{x\in V(c_t(\Gamma))}\left(v_{c_t(\Gamma)}(x)-2\right)\sum_{\tau\in S(c_t(\Gamma))}h_{x,\tau}(c_t(\Gamma))
$$
where $c_t(\Gamma)$ is contracting an edge $t$ in $\Gamma$. Spanning trees of $c_t(\Gamma)$ are in natural bijection with spanning trees of $\Gamma$ which contain $t$, $c_t(\tau)\leftrightarrow\tau$, so we can write
$$
 h(\delta\Gamma)=\sum_{t\in E(\Gamma)}\sum_{\substack{\tau\in S(\Gamma)\\t\in E(\tau)}}\sum_{x\in V(c_t(\Gamma))}
 \left(v_{c_t(\Gamma)}(x)-2\right)h_{x,c_t(\tau)}(c_t(\Gamma))=
 \sum_{\tau\in S(\Gamma)}\sum_{t\in E(\tau)}\sum_{x\in V(c_t(\Gamma))}
\left(v_{c_t(\Gamma)}(x)-2\right)h_{x,c_t(\tau)}(c_t(\Gamma)).
$$
For $x\in V(c_t(\Gamma))$ we have two choices, either $x\in V(\Gamma)$ not adjacent to $t$, or $x$ is produced by contraction of $t$. It can be seen (after being careful with signs) that in the first case $h_{x,c_t(\tau)}(c_t(\Gamma))=c_t(h_{x,\tau}(\Gamma))$ and in the second case $h_{x,c_t(\tau)}(c_t(\Gamma))=c_t\left(h_{\Gamma_-(t),\tau}(\Gamma)\right)=c_t\left(h_{\Gamma_+(t),\tau}(\Gamma)\right)$, where $\Gamma_-(t)$ and $\Gamma_+(t)$ are ends of the edge $t$.
Since in the first case vertices do not change valence after applying $c_t$, and it is $v_{c_t(\Gamma)}(x)=v_\Gamma(\Gamma_-(t))+v_\Gamma(\Gamma_+(t))-2$ in the second case, it holds that
\begin{multline*}
 h(\delta\Gamma)=\sum_{\tau\in S(\Gamma)}\sum_{t\in E(\tau)}
 \left(\sum_{\substack{x\in V(\Gamma)\\x\notin\{\Gamma_-(t),\Gamma_+(t)\}}}(v_{\Gamma}(x)-2)c_t(h_{x,\tau}(\Gamma))+(v_\Gamma(\Gamma_-(t))+v_\Gamma(\Gamma_+(t))-4)c_t(h_{\Gamma_-(t),\tau}(\Gamma))\right)=\\
 =\sum_{\tau\in S(\Gamma)}\sum_{t\in E(\tau)}\sum_{x\in V(\Gamma)}(v_{\Gamma}(x)-2)c_t(h_{x,\tau}(\Gamma))=
 \sum_{x\in V(\Gamma)}(v_{\Gamma}(x)-2)\sum_{\tau\in S(\Gamma)}\sum_{t\in E(\tau)}c_t(h_{x,\tau}(\Gamma)) .
\end{multline*}

On the other side
$$
\delta h(\Gamma)=\delta\left(\sum_{x\in V(\Gamma)}(v_\Gamma(x)-2)\sum_{\tau\in S(\Gamma)}h_{x,\tau}(\Gamma)\right)=
\sum_{x\in V(\Gamma)}(v_\Gamma(x)-2)\sum_{\tau\in S(\Gamma)}\sum_{t\in E(h_{x,\tau}(\Gamma))}c_t\left(h_{x,\tau}(\Gamma)\right).
$$
The edge $t\in E(h_{x,\tau}(\Gamma))$ can be chosen in two ways, it is either in the spanning tree $\tau$ or one of the edges of doted skeleton edge. In the first case the sum gives exactly $h(\delta\Gamma)$. Note that if contracting a skeleton edge makes a cycle in any color $c\leq k$, it makes so also after the mapping. Therefore
$$
\delta h(\Gamma)=h(\delta\Gamma)+\sum_{x\in V(\Gamma)}(v_\Gamma(x)-2)\sum_{\tau\in S(\Gamma)}\sum_{T\in D(\Gamma,\tau)}C_T\left(h_{x,\tau}(\Gamma)\right)
$$
where $D(\Gamma,\tau):=E(\Gamma)\setminus E(\tau)$ is bijective with the set of dotted edges in skeleton of $h_{x,\tau}(\Gamma)$ and $C_T\left(h_{x,\tau}(\Gamma)\right)$ is a graph in $\mO_{k+1}\sGC_{n+1}$ given from $h_{x,\tau}(\Gamma)$ by replacing dotted edge which come from $T$ with
$\begin{tikzpicture}[baseline=-.65ex,scale=.5]
 \node[int] (a) at (0,0) {};
 \node[int] (c) at (1,0) {};
 \draw (a) edge[-latex] (c);
\end{tikzpicture}
-(-1)^{n+1}
\begin{tikzpicture}[baseline=-.65ex,scale=.5]
 \node[int] (a) at (0,0) {};
 \node[int] (c) at (1,0) {};
 \draw (a) edge[latex-] (c);
\end{tikzpicture}$,
and is $0$ if that produces a cycle in color $k+1$. It cannot produce a cycle in any other color $c\leq k$.

To finish the proof it is enough to show that
\begin{equation}
N(\Gamma,x):=\sum_{\tau\in S(\Gamma)}\sum_{T\in D(\Gamma,\tau)}C_T\left(h_{x,\tau}(\Gamma)\right)
\end{equation}
is zero for every $x\in V(\Gamma)$. Terms in above relation can be summed in another order. Let $CT(\Gamma)$ be the set of all connected sub-graphs $\sigma$ of $\Gamma$ which contain all vertices and which has one more edge than spanning tree (has one cycle), and let $C(\sigma)$ be the set of edges in the cycle of $\sigma$. Clearly, $\sigma\setminus T$ for $T\in C(\sigma)$ is a spanning tree of $\Gamma$ and sets $\{(\tau,T)|\tau\in S(\Gamma),T\in D(\Gamma,\tau)\}$ and $\{(\sigma,T)|\sigma\in CT(\Gamma),T\in C(\sigma)\}$ are bijective, so
\begin{equation}
N(\Gamma,x)=\sum_{\sigma\in CT(\Gamma)}\sum_{T\in C(\sigma)}C_T\left(h_{x,\sigma\setminus T}(\Gamma)\right).
\end{equation}
It is now enough to show that $\sum_{T\in C(\sigma)}C_T\left(h_{x,\sigma\setminus T}(\Gamma)\right)=0$ for every $x\in V(\Gamma)$ and for every $\sigma\in CT(\Gamma)$.
Let $y\in V(\Gamma)$ be the vertex in cycle of $\sigma$ closest to vertex $x$ (along $\sigma$). After choosing $T\in C(\sigma)$, that cycle in $h_{x,\sigma\setminus T}(\Gamma)$ has one dotted edge, and directions of other edges go from $y$ to that dotted edge, such as on the diagram.
$$
\begin{tikzpicture}[baseline=0ex,scale=.5]
 \node[int] (a) at (0,1.1) {};
 \node[int] (b) at (1,.5) {};
 \node[int] (c) at (1,-.5) {};
 \node[int] (d) at (0,-1.1) {};
 \node[int] (e) at (-1,-.5) {};
 \node[int] (f) at (-1,.5) {};
 \node[above] at (a) {$\scriptstyle y$};
 \draw (a) edge[-latex] (b);
 \draw (b) edge[-latex] (c);
 \draw (a) edge[-latex] (f);
 \draw (f) edge[-latex] (e);
 \draw (e) edge[-latex] (d);
 \draw (c) edge[dotted] (d);
\end{tikzpicture}
$$
After acting by $C_T$ the dotted edge is replaced by
$\begin{tikzpicture}[baseline=-.65ex,scale=.5]
 \node[int] (a) at (0,0) {};
 \node[int] (c) at (1,0) {};
 \draw (a) edge[-latex] (c);
\end{tikzpicture}
+(-1)^n
\begin{tikzpicture}[baseline=-.65ex,scale=.5]
 \node[int] (a) at (0,0) {};
 \node[int] (c) at (1,0) {};
 \draw (a) edge[latex-] (c);
\end{tikzpicture}$,
like in the diagram
$$
\begin{tikzpicture}[baseline=-.6ex,scale=.5]
 \node[int] (a) at (0,1.1) {};
 \node[int] (b) at (1,.5) {};
 \node[int] (c) at (1,-.5) {};
 \node[int] (d) at (0,-1.1) {};
 \node[int] (e) at (-1,-.5) {};
 \node[int] (f) at (-1,.5) {};
 \node[above] at (a) {$\scriptstyle y$};
 \draw (a) edge[-latex] (b);
 \draw (b) edge[-latex] (c);
 \draw (a) edge[-latex] (f);
 \draw (f) edge[-latex] (e);
 \draw (e) edge[-latex] (d);
 \draw (c) edge[-latex] (d);
\end{tikzpicture}
\quad+(-1)^n\quad
\begin{tikzpicture}[baseline=-.6ex,scale=.5]
 \node[int] (a) at (0,1.1) {};
 \node[int] (b) at (1,.5) {};
 \node[int] (c) at (1,-.5) {};
 \node[int] (d) at (0,-1.1) {};
 \node[int] (e) at (-1,-.5) {};
 \node[int] (f) at (-1,.5) {};
 \node[above] at (a) {$\scriptstyle y$};
 \draw (a) edge[-latex] (b);
 \draw (b) edge[-latex] (c);
 \draw (a) edge[-latex] (f);
 \draw (f) edge[-latex] (e);
 \draw (e) edge[-latex] (d);
 \draw (c) edge[latex-] (d);
\end{tikzpicture}
$$
Careful calculation of the sign shows that those two terms are canceled with terms given from choosing neighboring dotted edges, and two last terms which does not have corresponding neighbor are indeed $0$ because they have cycle in color $k+1$. Therefore
\begin{equation}
\sum_{T\in C(\sigma)}C_T\left(h_{x,\sigma\setminus T}(\Gamma)\right)=0 
\end{equation}
for every $x\in V(\Gamma)$ and for every $\sigma\in CT(\Gamma)$, so $N(\Gamma,x)=0$.
\end{proof}

Clearly, $h(\Gamma)$ always sits in $\mO_{k+1}\sGC_{n+1}'$, so we can define a co-restriction $h:\mO_k\GC_n\rightarrow\mO_{k+1}\sGC_{n+1}'$ and for $n$ even the map to the quotient $h:\mO_k\GC_n\rightarrow\mO_{k+1}\sGC_{n+1}$, recall \eqref{def:notadp} and \eqref{def:sGCodd}. By the abuse of notation, we call all these maps $h$. Note that for $n$ even ($n+1$ odd) no graph is sent to $0$-class (graph with tadpole or multiple edge) because in that case there are no multiple edges in $\mO_k\GC_n$.

\subsection{The proof}

\begin{prop}
The map $h:\mO_k\GC_n\rightarrow\mO_{k+1}\sGC_{n+1}$ is a quasi-isomorphism.
\end{prop}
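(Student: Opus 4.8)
The plan is a spectral-sequence comparison for the filtration by the number of vertices on both complexes; we work throughout inside a fixed loop order $b=e-v$, since the complexes and the map $h$ split along it.

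On $\mO_k\GC_n$ the differential strictly decreases the number of vertices, so the associated graded differential vanishes and the first page is $\mO_k\GC_n$ with its own differential $\delta$. On $\mO_{k+1}\sGC_{n+1}$ the edge-contracting part of $\delta$ strictly decreases the number of vertices while $\delta_0$ of \eqref{def:d0} preserves it, so the associated graded differential is $\delta_0$, the first page is $H(\mO_{k+1}\sGC_{n+1},\delta_0)$, and the differential induced on it comes from edge contraction. A fixed degree together with the fixed loop order bound the number of vertices — they determine the number of vertices plus the number of dotted edges — so both filtrations are bounded in each degree and both spectral sequences converge correctly. Since $h$ preserves the number of vertices it is a filtered chain map, hence induces a morphism of spectral sequences; compatibility with the differentials on the first pages is then automatic, and by the comparison theorem it suffices to show that $h$ induces an \emph{isomorphism} on first pages. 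The whole question thus reduces to computing $H(\mO_{k+1}\sGC_{n+1},\delta_0)$ and checking that $h$ hits it bijectively.

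For the computation I would split $(\mO_{k+1}\sGC_{n+1},\delta_0)$ as the direct sum $\bigoplus_G C_G$ over graphs $G$ that retain only the orientations of the colours $c\le k$ (forget which edges are solid, which dotted, and forget colour $k+1$); $\delta_0$ does not change this datum. A basis element of $C_G$ is a set of ``dotted'' edges of $G$ together with an acyclic colour-$(k+1)$ orientation of the complementary ``solid'' edges, and $\delta_0$ resolves one dotted edge at a time into the difference of its two solid orientations, dropping a term that would create a colour-$(k+1)$ cycle; the admissible $G$ are exactly the graphs of $\mO_k\GC_n$. The key lemma to prove is that $H(C_G,\delta_0)$ is one-dimensional and concentrated in the homological degree with $b_1(G)=e-v+1$ dotted edges, i.e.\ with $v-1$ solid edges. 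The natural route is a deletion–contraction induction on the edges of $G$: resolving a fixed edge exhibits $C_G$ as the cone of a ``resolve that edge'' map between two smaller complexes of the same type, with cycles and small trees as base cases; the Euler characteristic of $C_G$, equal to $\pm(-1)^{b_1(G)}$, is consistent with this.

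Granting the lemma, $H(\mO_{k+1}\sGC_{n+1},\delta_0)=\bigoplus_G H(C_G,\delta_0)$ and $\mO_k\GC_n$ are indexed by the same set of graphs, and since the graph underlying every $h_{x,\tau}(\Gamma)$ is $\Gamma$ itself, $h$ is ``diagonal'': it maps the summand of $\Gamma$ into the one-dimensional space $H(C_\Gamma,\delta_0)$, landing in exactly the homological degree where that space sits ($h_{x,\tau}(\Gamma)$ has a spanning tree of solid edges, hence $b_1(\Gamma)$ dotted edges). So it only remains to check $h(\Gamma)\ne 0$ in $H(C_\Gamma,\delta_0)$ for every admissible $\Gamma$ — this is where the sum over spanning trees and the weights $(v(x)-2)$ of \eqref{def:h} are used. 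One pairs $h(\Gamma)$ (a $\delta_0$-cocycle by the chain-map property) against the functional detecting $H(C_\Gamma,\delta_0)$ produced by the proof of the lemma, built from spanning trees oriented away from a fixed vertex $x_0$ of valence $\ge3$ (which exists in any graph of $\mO_k\GC_n$); only the terms $h_{x_0,\tau}(\Gamma)$ can contribute, and the weight $(v(x_0)-2)$ prevents a global cancellation, so the pairing is nonzero (here one uses $\mathrm{char}\,\K=0$). Hence $h$ is an isomorphism on $E^1$, therefore on $E^\infty$, and by correct convergence a quasi-isomorphism onto $\mO_{k+1}\sGC_{n+1}'$; composing with the quasi-isomorphism $\mO_{k+1}\sGC_{n+1}'\to\mO_{k+1}\sGC_{n+1}$ — the identity for $n$ odd, Propositions~\ref{prop:notadp} and \ref{prop:nomul} for $n$ even — yields the statement. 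I expect the key lemma on $H(C_G,\delta_0)$, together with the sign bookkeeping in the nonvanishing of $h(\Gamma)$, to be the main obstacle; the rest is formal.
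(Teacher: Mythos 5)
Your overall architecture matches the paper's: filter by the number of (skeleton) vertices, reduce to the vertex-preserving differential $\delta_0$, decompose the target complex by the underlying shape, prove that each shape complex has one-dimensional $\delta_0$-homology concentrated in the spanning-tree degree, and check that $h$ hits the generator. But two points need attention. First, your key lemma that $H(C_G,\delta_0)$ is one-dimensional is only a plan; the paper proves it not by deletion--contraction but by choosing a vertex $y$ and a nested sequence of tree edges $e_1,\dots,e_{v-1}$ and building an explicit chain of quasi-isomorphisms $\Sigma^0\to\Sigma^1\to\dots\to\Sigma^{v-1}$ (Lemma \ref{lem:s}), each step replacing one tree edge by a thick edge carrying both colour-$(k+1)$ orientations, the last complex being visibly one-dimensional. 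This construction is not merely a computation: the composite $f=f^{v-1}\circ\dots\circ f^1$ is precisely the detecting functional used afterwards, so your induction would have to produce the analogous explicit datum. (You also never pass to distinguishable vertices, which is what makes the shape decomposition a genuine direct sum of complexes; the paper does this first, using that $\delta_0$ preserves the vertex count.)

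Second, and more seriously, your detection step rests on a false premise. A functional supported on configurations whose solid edges form a spanning tree oriented away from a single fixed vertex $x_0$ is \emph{not} a $\delta_0$-cocycle: apply $\delta_0$ to a configuration whose solid edges are $T\setminus\{a\}$ oriented away from $x_0$, with $a$ dotted. The two resolutions of $a$ give ``$T$ oriented away from $x_0$'' and ``$T$ with $a$ reversed''; the latter has two sources and is not oriented away from any single vertex, so your functional evaluates to $\pm1\neq0$ on this boundary. The functional that actually works (the paper's $f$) is insensitive to the colour-$(k+1)$ orientation of the tree edges up to the sign $-(-1)^n$ of \eqref{def:s}; consequently the terms $h_{x,T}(\Gamma)$ contribute for \emph{every} vertex $x$, not only $x_0$, and the nonvanishing is the statement that all these contributions carry the same sign --- the content of Figures \ref{fig:signheven} and \ref{fig:signhodd} --- so that the total coefficient is $\sum_x(v(x)-2)=2(e-v)>0$, nonzero because every graph in $\mO_k\GC_n$ has all vertices at least $2$-valent and at least one at least $3$-valent. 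The single weight $v(x_0)-2$ plays no special role; the sign bookkeeping across all $x$, which you defer, is the actual crux.
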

\begin{proof}
On the mapping cone of $h$ we set up the standard spectral sequence, on the number of vertices (in $\mO_{k+1}\sGC_{n+1}$ skeleton vertices). Standard splitting of complexes as the product of complexes with fixed loop number implies the correct convergence.

On the first page of the spectral sequence there is a mapping cone of the map
$$
h:\left(\mO_k\GC_n,0\right)\rightarrow\left(\mO_{k+1}\sGC_{n+1},\delta_0\right).
$$
Since no differential changes the number of vertices, the homology commutes with permuting vertices and to show that this mapping cone is acyclic, it is enough to show that for distinguishable vertices, i.e.\ for the map
$$
h:\left(\bar\mV\mO_k\GC_n,0\right)\rightarrow\left(\bar\mV\mO_{k+1}\sGC_{n+1},\delta_0\right).
$$

Edges are still not distinguishable. But we can talk about the \emph{positional edge}, that is a pair of vertices in $V(\Gamma)\times V(\Gamma)$ for which there is at least one edge between them.

For $n$ even ($n+1$ odd) there are no multiple edges in $\bar\mV\mO_k\GC_n$ by symmetry nor in $\bar\mV\mO_{k+1}\sGC_{n+1}$ by definition, so positional edges are indeed edges, and they can be distinguished.

For $n$ odd ($n+1$ even) there can be multiple edges both in $\bar\mV\mO_k\GC_n$ and $\bar\mV\mO_{k+1}\sGC_{n+1}$, so one positional edge can consist of more edges and they are undistinguishable if of the same type. In $\bar\mV\mO_{k+1}\sGC_{n+1}$ symmetry reasons allow only the following positional edges:
$$
\begin{tikzpicture}[baseline=-.65ex]
 \node[int] (a) at (0,0) {};
 \node[int] (b) at (1,0) {};
 \draw (a) edge[very thick,dotted] node[above] {$\scriptstyle m$} (b);
\end{tikzpicture}
,\quad
\begin{tikzpicture}[baseline=-.65ex]
 \node[int] (a) at (0,0) {};
 \node[int] (b) at (1,0) {};
 \draw (a) edge[very thick,dotted, bend left=20] node[above] {$\scriptstyle m-1$} (b);
 \draw (a) edge[-latex, bend right=20] (b);
\end{tikzpicture}
,\quad\text{and}\quad
\begin{tikzpicture}[baseline=-.65ex]
 \node[int] (a) at (0,0) {};
 \node[int] (b) at (1,0) {};
 \draw (a) edge[very thick,dotted, bend left=20] node[above] {$\scriptstyle m-1$} (b);
 \draw (a) edge[latex-, bend right=20] (b);
\end{tikzpicture}
$$
for $m\geq 1$, where the thick dotted line with a number $l$ represent $l$ dotted lines. Additionally, a positional edge has an orientation in colors $c\leq k$ both in $\bar\mV\mO_k\GC_n$ and $\bar\mV\mO_{k+1}\sGC_{n+1}$.

With distinguishable vertices we can also chose a preferred orientation of an edge (e.g.\ from the vertex with a smaller number towards the vertex with the bigger number).

Let the \emph{shape}
\begin{equation}
s:\bar\mV\mO_{k+1}\sGC_{n+1}\rightarrow\bar\mV\mO_k\GC_n
\end{equation}
map a graph $\Gamma$ to the graph with the same vertices as the skeleton, and with the same edges, but forgetting the type of the edge and the orientation in the color $k+1$, while still remembering the orientation in other colors.

The differential $\delta_0$ does not change the shape, just the type of an edge. Therefore the mapping cone of $h:\left(\bar\mV\mO_k\GC_n,0\right)\rightarrow\left(\bar\mV\mO_{k+1}\sGC_{n+1},\delta_0\right)$ splits as a direct product of mapping cones of maps for each shape $\Sigma\in\bar\mV\mO_k\GC_n$, that is of

\begin{equation}
\label{core}
h:\left(\K\Sigma,0\right)\rightarrow\left(\bar\mV\mO_{k+1}\sGC_{n+1}^\Sigma,\delta_0\right)
\end{equation}
where $\bar\mV\mO_{k+1}\sGC_{n+1}^\Sigma$ is the complex spanned by all graphs $\Gamma\in\bar\mV\mO_{k+1}\sGC_{n+1}$ such that $s(\Gamma)=\Sigma$.

It is now enough to show that \eqref{core} is a quasi-isomorphism. Homology of the left-hand complex is clearly one-dimensional, so we need to show that the homology of the right-hand side is also one dimensional, and that $\Sigma$ is mapped to a representative of the class.

Let us now fix a shape $\Sigma$ with $v$ vertices. In $\Sigma$ we choose one vertex $y$ and $v-1$ positional edges, say $e_1,\dots,e_{v-1}$, such that for every $i=1,\dots,v-1$ the edges $\{e_1,\dots, e_i\}$ form a sub-tree of $\Sigma$ that contains vertex $y$.

Let $\Sigma^i$ for $i=0,\dots,v-1$ be the complex defined as follows. It is spanned by graphs with shape $\Sigma$ and three types of edges. There is one thick edge
\begin{tikzpicture}[baseline=-.65ex,scale=.5]
 \node[int] (a) at (0,0) {};
 \node[int] (c) at (1,0) {};
 \draw (a) edge[very thick] (c);
\end{tikzpicture}
on the position of each positional edge $\{e_1,\dots e_i\}$ with the orientations in colors $c\leq k$, and solid and dotted edges on the other skeleton edges, solid being oriented in colors $c\leq k+1$ and dotted being colored in colors $c\leq k$, such that there are no cycles in any color $c\leq k+1$ where thick edges are considered to have both orientations in the color $k+1$. The differential $\delta_0$ acts on dotted edges as usual, while the thick edges stay always unchanged. 
The strict definition is left to the reader.
An example for $k=0$ is shown in Figure \ref{fig:complexSigma}.

\begin{figure}[h]
$$
\begin{tikzpicture}[baseline=1ex]
 \node[int] (a) at (-1,-.5) {};
 \node[int] (b) at (1,-.5) {};
 \node[int] (c) at (0,0) {};
 \node[int] (d) at (0,1.1) {};
 \draw (a) edge (b);
 \draw (a) edge (c);
 \draw (a) edge (d);
 \draw (b) edge (c);
 \draw (b) edge (d);
 \draw (c) edge[bend left=15] (d);
 \draw (c) edge[bend right=15] (d);
\end{tikzpicture}\quad\quad
\begin{tikzpicture}[baseline=1ex]
 \node[int] (a) at (-1,-.5) {};
 \node[int] (b) at (1,-.5) {};
 \node[int] (c) at (0,0) {};
 \node[int] (d) at (0,1.1) {};
 \draw (a) edge node[below] {$\scriptstyle e_3$} (c);
 \draw (b) edge node[below] {$\scriptstyle e_2$} (c);
 \draw (c) edge[bend left=15] node[right] {$\scriptstyle e_1$} (d);
\end{tikzpicture}\quad\quad
\begin{tikzpicture}[baseline=1ex]
 \node[int] (a) at (-1,-.5) {};
 \node[int] (b) at (1,-.5) {};
 \node[int] (c) at (0,0) {};
 \node[int] (d) at (0,1.1) {};
 \draw (a) edge[-latex] (b);
 \draw (a) edge[-latex] (c);
 \draw (a) edge[dotted] (d);
 \draw (b) edge[very thick] (c);
 \draw (b) edge[dotted] (d);
 \draw (c) edge[very thick,bend left=15] (d);
 \draw (c) edge[dotted,bend right=15] (d);
\end{tikzpicture}
$$
\caption{\label{fig:complexSigma}
For a skeleton $\Sigma$ on the left with chosen edges in the middle, an example of a graph in $\Sigma^2$ is drawn on the left.}
\end{figure}
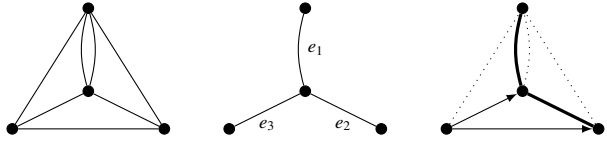

We define maps $f^{i+1}\colon\Sigma^i\rightarrow\Sigma^{i+1}$ which acts on the positional edge $e_{i+1}$ as follows
\begin{equation}
\label{def:s}
f^{i+1}:\,
\begin{tikzpicture}[baseline=-.65ex]
 \node[int] (a) at (0,0) {};
 \node[int] (b) at (1,0) {};
 \draw (a) edge[very thick,dotted] node[above] {$\scriptstyle m$} (b);
\end{tikzpicture}
\,\mapsto 0,\quad
\begin{tikzpicture}[baseline=-.65ex]
 \node[int] (a) at (0,0) {};
 \node[int] (b) at (1,0) {};
 \draw (a) edge[very thick,dotted, bend left=20] node[above] {$\scriptstyle m-1$} (b);
 \draw (a) edge[-latex, bend right=20] (b);
\end{tikzpicture}
\,\mapsto\,
\begin{tikzpicture}[baseline=-.65ex]
 \node[int] (a) at (0,0) {};
 \node[int] (b) at (1,0) {};
 \draw (a) edge[very thick,dotted, bend left=20] node[above] {$\scriptstyle m-1$} (b);
 \draw (a) edge[very thick, bend right=20] (b);
\end{tikzpicture}
\,,\quad
\begin{tikzpicture}[baseline=-.65ex]
 \node[int] (a) at (0,0) {};
 \node[int] (b) at (1,0) {};
 \draw (a) edge[very thick,dotted, bend left=20] node[above] {$\scriptstyle m-1$} (b);
 \draw (a) edge[latex-, bend right=20] (b);
\end{tikzpicture}
\,\mapsto -(-1)^n\,
\begin{tikzpicture}[baseline=-.65ex]
 \node[int] (a) at (0,0) {};
 \node[int] (b) at (1,0) {};
 \draw (a) edge[very thick,dotted, bend left=20] node[above] {$\scriptstyle m-1$} (b);
 \draw (a) edge[very thick, bend right=20] (b);
\end{tikzpicture}\,.
\end{equation}
The preferred direction (left to right) is the one away from the vertex $y$ along the thick tree.
It is easily seen that $f^{i+1}$ is a map of complexes. Therefore we have defined the chain of complexes and maps between them:
$$
\bar\mV\mO_{k+1}\sGC_{n+1}^\Sigma=\Sigma^0\,\tto{f^1}\,\Sigma^1\,\tto{f^2}\,\dots\,\tto{f^{v-1}}\,\Sigma^{v-1}.
$$

\begin{lemma}
\label{lem:s}
For every $i=0,\dots,v-2$ the map $f^{i+1}\colon\Sigma^i\rightarrow\Sigma^{i+1}$ is a quasi-isomorphism.
\end{lemma}
\begin{proof}
We set up a spectral sequence on the mapping cone of $f^{i+1}:\Sigma^i\rightarrow\Sigma^{i+1}$ on the number of dotted edges that are not in the positional edge $e_{i+1}$. The spectral sequence clearly converges correctly. On the first page there is a mapping cone of the map
$$
f^{i+1}:\left(\Sigma^i,\delta_0^{i+1}\right)\rightarrow\left(\Sigma^{i+1},0\right)
$$
where $\delta_0^{i+1}$ is the part of the differential acting on the positional edge $e_{i+1}$. This map is clearly quasi-isomorphism, concluding the proof.
\end{proof}

\begin{rem}
The good choice of maps $f^{i+1}$ would not be possible for multiple edges in $\mO_{k+1}\sGC_{n+1}^\mul$ for $n$ even. This is the very reason why did we get rid of those multiple edges in Subsection \ref{ss:tadmul}.
\end{rem}

$\Sigma^{v-1}$ is generated by only one graph, the one without solid edges. Let us call this graph $\bar\Sigma$. So $H(\Sigma^{v-1},\delta_0)$ is one-dimensional.

Let $f:=f^{v-1}\circ\dots\circ f^1$ and let us consider the composition $f\circ h:(\K\Sigma,0)\rightarrow\left(\Sigma^{v-1}\right)$. Recall \eqref{def:h} that 
$$
h(\Sigma)=
\sum_{\tau\in S(\Sigma)}\sum_{x\in V(\Sigma)}(v(x)-2)h_{x,\tau}(\Sigma).
$$
Since $f^i$ sends dotted edges to $0$, the term of the first sum that can survive the action of the composition $f$ is only the one where the chosen sub-tree $\tau$ coincides with the positional sub-tree $T:=\{e_1,\dots,e_{v-1}\}$. So
\begin{equation}
\label{sh}
f\circ h(\Sigma)=M\sum_{x\in V(\Sigma)}\pm(v(x)-2)\bar\Sigma,
\end{equation}
where $M$ is a positive factor coming from multiple edges in $T$ since, technically, if there are multiple edges in $T$ there are more ways to choose a sub-tree of real edges.

The goal is to show that $\Sigma$ is not sent to $0$. To do that, we need to be careful with the signs. We define the sign being $+$ if $x=y$. Let us chose $x$ distant from $y$ by $r$ edges in the tree $T$. We will show that the signs of this term in \eqref{sh} is also $+$.

Recall the sign convention in the definition of $h$ in Subsection \ref{ss:map}. For $n$ even pre-factor $(-1)^{nr}$ is always $+$. Vertices are indistinguishable, so the sign does not depend on the choice of the vertex $x$. But choosing another $x$ will change the convention of naming vertices in $h_{x,T}(\Sigma)$. This will give us a pre-factor $(-1)^r$ as described in Figure \ref{fig:signheven}. While acting by $s$, $r$ edges will go in opposite direction and give a pre-factor $(-(-1)^n)^r=(-1)^r$, recall \eqref{def:s}. The overall sign of $f\circ h_{x,T}$ is therefore $(-1)^r(-1)^r=+$.

\begin{figure}[h]
$$
\begin{tikzpicture}[baseline=-3ex]
 \node[int] (a) at (0,0) {};
 \node[int] (b) at (1,0) {};
 \node[int] (c) at (1.5,-.85) {};
 \node[int] (d) at (2.5,-.85) {};
 \draw (a) edge node[below] {$1$} (b);
 \draw (b) edge node[left] {$2$} (c);
 \draw (c) edge node[below] {$3$} (d);
 \draw (a) edge (-.5,.85) {};
 \draw (a) edge (-.5,-.85) {};
 \draw (b) edge (1.5,.85) {};
 \draw (c) edge (1,-1.7) {};
 \draw (d) edge (3,0) {};
 \draw (d) edge (3,-1.7) {};
 \node[left] at (a) {$y=x$};
\end{tikzpicture}
\quad\mxto{h}\quad
\begin{tikzpicture}[baseline=-3ex]
 \node[int] (a) at (0,0) {};
 \node[int] (b) at (1,0) {};
 \node[int] (c) at (1.5,-.85) {};
 \node[int] (d) at (2.5,-.85) {};
 \draw (a) edge[-latex] (b);
 \draw (b) edge[-latex] (c);
 \draw (c) edge[-latex] (d);
 \draw (a) edge[-latex] (-.5,.85) {};
 \draw (a) edge[-latex] (-.5,-.85) {};
 \draw (b) edge[-latex] (1.5,.85) {};
 \draw (c) edge[-latex] (1,-1.7) {};
 \draw (d) edge[-latex] (3,0) {};
 \draw (d) edge[-latex] (3,-1.7) {};
 \node[left] at (a) {$0$};
 \node[right] at (b) {$1$};
 \node[left] at (c) {$2$};
 \node[right] at (d) {$3$};
\end{tikzpicture}
$$
$$
\begin{tikzpicture}[baseline=-3ex]
 \node[int] (a) at (0,0) {};
 \node[int] (b) at (1,0) {};
 \node[int] (c) at (1.5,-.85) {};
 \node[int] (d) at (2.5,-.85) {};
 \draw (a) edge node[below] {$1$} (b);
 \draw (b) edge node[left] {$2$} (c);
 \draw (c) edge node[below] {$3$} (d);
 \draw (a) edge (-.5,.85) {};
 \draw (a) edge (-.5,-.85) {};
 \draw (b) edge (1.5,.85) {};
 \draw (c) edge (1,-1.7) {};
 \draw (d) edge (3,0) {};
 \draw (d) edge (3,-1.7) {};
 \node[left] at (a) {$y$};
 \node[right] at (d) {$x$};
\end{tikzpicture}
\quad\mxto{h}\quad
\begin{tikzpicture}[baseline=-3ex]
 \node[int] (a) at (0,0) {};
 \node[int] (b) at (1,0) {};
 \node[int] (c) at (1.5,-.85) {};
 \node[int] (d) at (2.5,-.85) {};
 \draw (a) edge[latex-] (b);
 \draw (b) edge[latex-] (c);
 \draw (c) edge[latex-] (d);
 \draw (a) edge[-latex] (-.5,.85) {};
 \draw (a) edge[-latex] (-.5,-.85) {};
 \draw (b) edge[-latex] (1.5,.85) {};
 \draw (c) edge[-latex] (1,-1.7) {};
 \draw (d) edge[-latex] (3,0) {};
 \draw (d) edge[-latex] (3,-1.7) {};
 \node[left] at (a) {$1$};
 \node[right] at (b) {$2$};
 \node[left] at (c) {$3$};
 \node[right] at (d) {$0$};
\end{tikzpicture}
$$
\caption{\label{fig:signheven}
For $n$ even edges in $\mO_k\GC_n$ and vertices in $\mO_{k+1}\sGC_{n+1}$ are odd. Choosing vertex $x$ that is $r$ edges away from $y$ along the tree shifts labels of vertices between $x$ and $y$. While renaming them to the state of $x=y$ we get the sign $(-1)^r$.}
\end{figure}
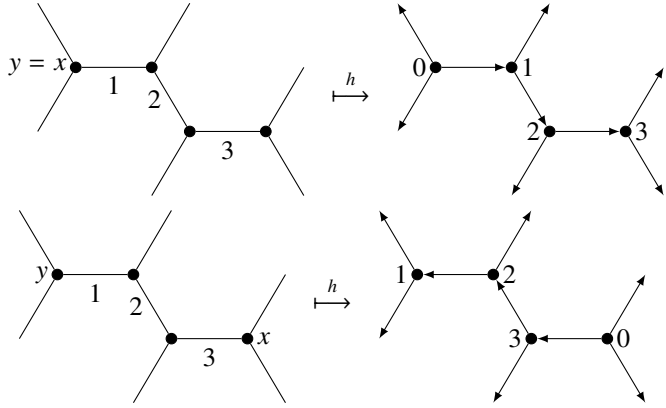

For $n$ odd exactly $r$ edges from the tree will change direction while acting by $h_{x,T}$, so there is a pre-factor $(-1)^r$ from the definition. If we choose another vortex $x$, we first need to rename it to $0$. We do the renaming such that the names of the edges at the end are the same, giving a pre-factor $(-1)^r$ as described in Figure \ref{fig:signhodd}. The overall sign of $f\circ h_{x,T}$ is $(-1)^r(-1)^r=+$. Acting by $f$ does not change sign in this case, recall \eqref{def:s}, so the overall sign of $f\circ h_{x,T}$ remains $+$.

\begin{figure}[h]
$$
\begin{tikzpicture}[baseline=-3ex]
 \node[int] (a) at (0,0) {};
 \node[int] (b) at (1,0) {};
 \node[int] (c) at (1.5,-.85) {};
 \node[int] (d) at (2.5,-.85) {};
 \draw (a) edge[->] (b);
 \draw (b) edge[->] (c);
 \draw (c) edge[->] (d);
 \draw (a) edge[->] (-.5,.85) {};
 \draw (a) edge[->] (-.5,-.85) {};
 \draw (b) edge[->] (1.5,.85) {};
 \draw (c) edge[->] (1,-1.7) {};
 \draw (d) edge[->] (3,0) {};
 \draw (d) edge[->] (3,-1.7) {};
 \node[left] at (a) {$0=y=x$};
 \node[right] at (b) {$1$};
 \node[left] at (c) {$2$};
 \node[right] at (d) {$3$};
\end{tikzpicture}
\quad\mxto{h}\quad
\begin{tikzpicture}[baseline=-3ex]
 \node[int] (a) at (0,0) {};
 \node[int] (b) at (1,0) {};
 \node[int] (c) at (1.5,-.85) {};
 \node[int] (d) at (2.5,-.85) {};
 \draw (a) edge[-latex] node[below] {$1$} (b);
 \draw (b) edge[-latex] node[left] {$2$} (c);
 \draw (c) edge[-latex] node[below] {$3$} (d);
 \draw (a) edge[-latex] (-.5,.85) {};
 \draw (a) edge[-latex] (-.5,-.85) {};
 \draw (b) edge[-latex] (1.5,.85) {};
 \draw (c) edge[-latex] (1,-1.7) {};
 \draw (d) edge[-latex] (3,0) {};
 \draw (d) edge[-latex] (3,-1.7) {};
 \node[left] at (a) {$0$};
\end{tikzpicture}
$$
$$
(-1)^r(-1)^r\quad
\begin{tikzpicture}[baseline=-3ex]
 \node[int] (a) at (0,0) {};
 \node[int] (b) at (1,0) {};
 \node[int] (c) at (1.5,-.85) {};
 \node[int] (d) at (2.5,-.85) {};
 \draw (a) edge[->] (b);
 \draw (b) edge[->] (c);
 \draw (c) edge[->] (d);
 \draw (a) edge[->] (-.5,.85) {};
 \draw (a) edge[->] (-.5,-.85) {};
 \draw (b) edge[->] (1.5,.85) {};
 \draw (c) edge[->] (1,-1.7) {};
 \draw (d) edge[->] (3,0) {};
 \draw (d) edge[->] (3,-1.7) {};
 \node[left] at (a) {$1=y$};
 \node[right] at (b) {$2$};
 \node[left] at (c) {$3$};
 \node[right] at (d) {$x=0$};
\end{tikzpicture}
\quad\mxto{h}\quad(-1)^r\quad
\begin{tikzpicture}[baseline=-3ex]
 \node[int] (a) at (0,0) {};
 \node[int] (b) at (1,0) {};
 \node[int] (c) at (1.5,-.85) {};
 \node[int] (d) at (2.5,-.85) {};
 \draw (a) edge[-latex] node[below] {$1$} (b);
 \draw (b) edge[-latex] node[left] {$2$} (c);
 \draw (c) edge[-latex] node[below] {$3$} (d);
 \draw (a) edge[-latex] (-.5,.85) {};
 \draw (a) edge[-latex] (-.5,-.85) {};
 \draw (b) edge[-latex] (1.5,.85) {};
 \draw (c) edge[-latex] (1,-1.7) {};
 \draw (d) edge[-latex] (3,0) {};
 \draw (d) edge[-latex] (3,-1.7) {};
 \node[right] at (d) {$0$};
\end{tikzpicture}
$$
\caption{\label{fig:signhodd}
For $n$ odd vertices in $\mO_k\GC_n$ and edges in $\mO_{k+1}\sGC_{n+1}$ are odd. If we choose vertex $x$ that is $r$ edges away from $y$ we need to rename it to $0$ and we shift all names of the vertices in between. This gives the sign $(-1)^r$. Map $h_{x,T}$ changes the direction of $r$ edges, so it gives another sign $(-1)^r$.}
\end{figure}
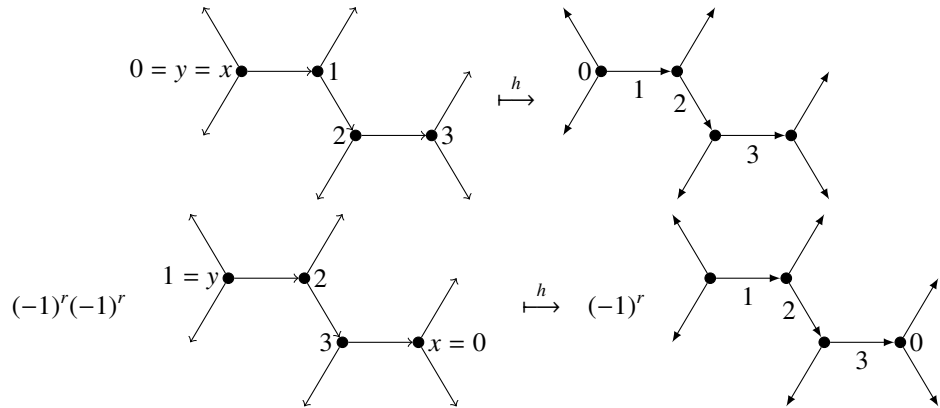

We have shown that $f\circ h$ maps the generator of the homology class in $(\K\Sigma,0)$ to the generator of the homology class in $(\Sigma^{v-1},\delta_0)$, so it is a quasi-isomorphism. Lemma \ref{lem:s} implies that $f$ is quasi-isomorphism, so $h$ from \eqref{core} is a quasi-isomorphism too. That was to be demonstrated.
\end{proof}

Theorem \ref{thm:main} now follows directly using Propositions \ref{prop:3}, \ref{prop:notadp} and \ref{prop:nomul}.


\begin{thebibliography}{10}


\bibitem{DGC1}
Anton Khoroshkin, Thomas Willwacher and Marko \v Zivkovi\'c.
\newblock Differentials on graph complexes.
\newblock \emph{Adv. Math.} 307:1184--1214, 2017.

\bibitem{Kont1}
Maxim Kontsevich.
\newblock Formal (non)commutative symplectic geometry.
\newblock \emph{In Proceedings of the I. M. Gelfand seminar 1990--1992}, 173–-188. Birkhauser, 1993.

\bibitem{Kont2}
Maxim Kontsevich.
\newblock Formality Conjecture.
\newblock \emph{Deformation Theory and Symplectic Geometry} 139--156, 1997.
\newblock D.\ Sternheimer et al.\ (eds.).

\bibitem{Merk1}
Sergei Merkulov.
\newblock Multi-oriented props and homotopy algebras with branes.
\newblock arXiv:1712.09268.

\bibitem{Merk2}
Sergei Merkulov.
\newblock Deformation quantization of homotopy algebras with branes.
\newblock (To appear.)

\bibitem{MV}
S.\ Merkulov and B.\ Vallette.
\newblock Deformation theory of representations of prop(erad)s I \& II.
\newblock \emph{Journal f\" ur die reine und angewandte Mathematik}.
\newblock (Qrelle) 634: 51--106 \& 636: 123--174, 2009.

\bibitem{MW}
Sergei Merkulov and Thomas Willwacher.
\newblock Props of ribbon graphs, involutive Lie bialgebras and moduli spaces of curves.
\newblock arXiv:1511.07808.

\bibitem{grt}
Thomas Willwacher.
\newblock {M. Kontsevich's graph complex and the Grothendieck-Teichm\"uller Lie
  algebra}.
\newblock \emph{Invent. Math.} 200(3):671--760, 2015.

\bibitem{oriented}
Thomas Willwacher.
\newblock {The oriented graph complexes}.
\newblock \emph{Commun. Math. Phys.} 334: 1649, 2015.

\bibitem{eulerchar}
Thomas Willwacher and Marko \v{Z}ivkovi\'c.
\newblock {Multiple edges in M. Kontsevich's graph complexes and computations of the dimensions and Euler characteristics}.
\newblock \emph{Adv. Math.} 272:553--578, 2015.

\bibitem{Multi2}
Marko \v{Z}ivkovi\'c.
\newblock Multi-directed graph complexes and quasi-isomorphisms between them II: Sourced graphs.
\newblock arXiv:1712.01203.

\end{thebibliography}
\end{document}